\numberwithin{equation}{section}
\theoremstyle{plain}
\newtheorem{theorem}{Theorem}[section]
\newtheorem{hypothesis}{Hypothesis}
\newtheorem{step}{Step}
\newtheorem{lemma}[theorem]{Lemma}
\newtheorem{proposition}[theorem]{Proposition}
\newtheorem{remark}[theorem]{Remark}
\theoremstyle{definition}
\newtheorem{definition}{Definition}
\DeclareMathOperator{\dd}{d\!}
\DeclareMathOperator{\n}{{\bf n}}
\newcommand{\R}{\mathbb{R}}
\newcommand{\SF}{\mathbb{S}}
\newcommand{\Z}{\mathbb{Z}}
\begin{document}

\title[Multiphase solutions]{ Multiphase solutions to the vector Allen-Cahn equation: crystalline and other complex symmetric structures
}
\author{Peter W. Bates}
\address[P.~Bates]
{Department of Mathematics\\
 Michigan State University\\
 East Lansing, Michigan 48824}
\email[P.~Bates]{bates@math.msu.edu}
\author{Giorgio Fusco}
\address[G.~Fusco]{Dipartimento di Matematica Pura ed Applicata\\ Universit\`a degli Studi dell'Aquila\\ Via Vetoio\\ 67010 Coppito\\ L'Aquila\\ Italy}
\email[G.~Fusco]{fusco@univaq.it}
\author{Panayotis Smyrnelis} \address[P.~ Smyrnelis]{Department of Mathematics\\ University of Athens\\ Panepistemiopolis\\ 15784 Athens\\ Greece}
\email[P. ~Smyrnelis]{smpanos@math.uoa.gr}
\thanks{PWB was supported in part by NSF DMS-0908348,  DMS-1413060, and the IMA.  GF was partially supported by the IMA.  PS was partially supported through the project PDEGE – Partial Differential Equations
Motivated by Geometric Evolution, co-financed by the European Union – European Social
Fund (ESF) and national resources, in the framework of the program Aristeia of the ‘Operational
Program Education and Lifelong Learning’ of the National Strategic Reference Framework
(NSRF)}



\subjclass[2000]{35J91, 35J50}
\keywords{vector Allen-Cahn equation, reflection groups, homomorphism, equivariant, positive map}

\begin{abstract}
΅We present a systematic study of entire symmetric solutions $u:\R^n \to \R^m$ of the vector Allen-Cahn equation $\Delta u-W_u(u)=0$, $x \in \R^n$, where $W:\R^m \to \R$ is smooth, symmetric, nonnegative with a finite number of zeros, and $ W_u := (\partial W / \partial u_1, \dots, \partial W / \partial u_m)^{\top}$. We introduce a general notion of equivariance with respect to a homomorphism $f:G \to \Gamma$ ($G$, $\Gamma$ reflection groups) and prove two abstract results, concerning the cases of $G$ finite and $G$ discrete, for the existence of equivariant solutions. Our approach is variational and based on a mapping property of the parabolic vector Allen-Cahn equation and on a pointwise estimate for vector minimizers.
\end{abstract}

\maketitle
\section{Introduction}
\setcounter{figure}{0}
We study bounded solutions $u:\R^n\rightarrow\R^m$ to the vector Allen-Cahn equation
\begin{eqnarray}\label{elliptic system}
\Delta u-W_u(u)=0,\;x\in\R^n,
\end{eqnarray}
where $W:\R^m\rightarrow\R$ is a smooth nonnegative multi-well potential that vanishes at $N\geq 1$ distinct points $a_1, a_2, \cdots, a_N \in \R^m$, and $ W_u := (\partial W / \partial u_1, \dots, \partial W / \partial u_m)^{\top}$.
The stationary Allen-Cahn equation is formally the $L^2$ Euler-Lagrange equation associated to the functional
\begin{eqnarray}\label{functional}
J_\Omega(u):=\int_\Omega\Big{(}\frac{1}{2}\vert\nabla u\vert^2+W(u) \Big{)} \dd x,
\end{eqnarray}
defined on each bounded domain $\Omega\subset\R^n$ for $u\in W^{1,2}(\Omega,\R^m)$.

A complete description of the set of  entire solutions of (\ref{elliptic system}), even in the scalar case $m=1$ in spite of many interesting and deep results (see for example \cite{dancer} \cite{malchiodi} \cite{delpino-wei}  and the references therein), appears to be an impossible task. In this paper we are mostly interested in the vector case $m>1$. We focus on {\it symmetric} potentials and proceed to a systematic discussions of symmetric solutions that can be determined by minimization of the energy (\ref{functional}).

The restriction to the symmetric setting has both physical and mathematical motivations. From the physical point of view we observe that symmetry is ubiquitous in nature and that
 the modeling of systems (e.g. materials) that can exist in different symmetric crystalline
 phases requires the use of vector {\it order parameters} \cite{bcmw}.  Symmetric structures are observed at the junction of three or four coexisting phases in physical space. Similar
 structures appear at the singularities of soap films and compounds of soap bubbles.

From the mathematical point of view, as we discuss in Sec.2, symmetry plays an essential role in the derivation of pointwise
 estimate for solutions of (\ref{elliptic system}). Indeed by exploiting the symmetry we show,
 as in \cite{alikakos-fusco} and  \cite{alikakos-smyrnelis},  the existence of minimizers of
 $J_\Omega$ that map {\it fundamental domains} in the domain $x$-space into {\it fundamental domains} in the target $u$-space. A basic consequence of this is the
 existence of minimizers that in certain sub-domains avoid all the minima of $W$ but one. It follows that in each such sub-domain the potential $W$ can be considered to have a unique
 global minimum. This is a key point since, for potentials with two or more global minima,
 it is very difficult to determine {\it a priori} in which subregions a minimizer $u$ of
 $J_\Omega$ is near to one or another of the minima of $W$. Even in the scalar case,
 $m=1$, for a potential with two minima this has proved to be a very difficult question. For
 instance, this is one of the main difficulties  in the problem of characterizing the level sets of solutions $u:\R^n\rightarrow\R$ of the equation
\begin{eqnarray}\label{dg}
\Delta u=u^3-u,\;x\in\R^n,
\end{eqnarray}
that satisfy the bound $\vert u\vert\leq 1$ and the conditions
\begin{eqnarray}\label{conditions}
\frac{\partial u}{\partial x_n}> 0,
\end{eqnarray}
a problem related to a celebrated conjecture of De Giorgi \cite{farina-valdinoci}.

The study of equation (\ref{elliptic system}) under symmetry hypotheses on the potential $W$ was initiated in \cite{bronsard-reitich} and  \cite{bronsard-gui-schatzman} where the existence of a solution $u:\R^2\rightarrow\R^2$ of (\ref{elliptic system}) with the
 symmetry  of the equilateral triangle was established. Existence of solutions with the
 symmetry of the regular tetrahedron was later proved in \cite{gui-schatzman} for $n=m=3$. These solutions are known as the triple and the quadruple {\it junction} states,
 respectively and, as indicated before with reference to soap films, are related to minimal surface complexes and in particular to the local structure of singularities where three sheets meet along a line or where four of these lines meet at a point \cite{taylor}. Existence of solutions $u:\R^n\rightarrow\R^m$ equivariant with respect to a generic finite reflection group $G$ acting both on the domain and on the target space was studied in \cite{alikakos-fusco}, \cite{alikakos} and \cite{fusco}.

We introduce an abstract notion of equivariance of maps that includes as a special case the notion considered in  \cite{bronsard-gui-schatzman}, \cite{gui-schatzman} and \cite{alikakos-fusco}. We assume there is a finite or discrete (infinite) reflection group $G$ acting on $\R^n$ and a finite reflection group $\Gamma$ acting on $\R^m$ and assume there exists a homomorphism $f:G\rightarrow\Gamma$ between $G$ and $\Gamma$. For the concept of fundamental domain and  for the general theory of reflection groups we refer to \cite{grove-benson} and \cite{humphreys}. We define a map $u:\R^n\rightarrow\R^m$ to be $f$-equivariant if
\begin{eqnarray}\label{f-equivariant}
u(g x)=f(g)u(x),\;\text{ for }\;g\in G,\;x\in\R^n.
\end{eqnarray}
We characterize the homomorphisms which allow for the existence of $f$-equivariant maps that map a fundamental domain $F$ for the action of $G$ on $\R^n$ into a fundamental domain $\Phi$ for the action of $\Gamma$ on $\R^m$:
\begin{eqnarray}\label{positivity}
u(\overline{F})\subset\overline{\Phi}.
\end{eqnarray}
We refer to these homomorphism and to the maps that satisfy (\ref{positivity}) as {\it positive}. Positive homomorphisms (see Definition \ref{positive homomorphism} below) have certain mapping properties that relate the projections associated to the walls of a fundamental domain $F$ to the projections associated to the walls of a correponding region $\Phi$. These properties are instrumental to show that minimizing in the class of $f$-equivariant maps that satisfy (\ref{positivity})
does not affect the Euler-Lagrange equation and renders a smooth solution of (\ref{elliptic system}). The proof of this fact is based on a quite sophisticated use of the maximum principle for parabolic equations that was first introduced in \cite{smyrnelis} and \cite{alikakos-smyrnelis}. We prove (see Lemma \ref{positivity flow}) that, provided $f$ is a positive homomorphism, the $L^2$ gradient flow associated to the functional \eqref{functional} preserves the positivity condition \eqref{positivity}. By a careful choice of certain scalar projections of the vector parabolic equation that describes the above mentioned gradient flow, we show that this fact is indeed a consequence of the maximum principle. Based on this and on a pointwise estimate from \cite{fusco} and \cite{f}  we prove two abstract existence results: Theorem \ref{theorem1} which concerns the case where $G$ is a finite reflection group and Theorem \ref{theorem2} that treats the case of a discrete (infinite) group $G$.

From (\ref{positivity}) and the $f$-equivariance of $u$ it follows
\begin{eqnarray}\label{positivity-1}
u(g\overline{F})\subset f(g)\overline{\Phi},\;\text{ for }\;g\in G.
\end{eqnarray}
Therefore, besides its importance for the proofs of Theorem \ref{theorem1} and Theorem \ref{theorem2}, the mapping property (\ref{positivity}) is a source of information on the
 geometric structure of the vector valued map $u$. The fact that (\ref{positivity-1}) holds
 true in general in the abstract setting of the present analysis can perhaps be regarded as a
 major result of our work. Indeed due to the variety of  choices for $n$ and $m$, the
 dimensions of domain and target space, of the possible choices of the reflection groups $G$ and $\Gamma$, and of the homomorphism $f:G\rightarrow\Gamma$, we will deduce
 from Theorem \ref{theorem1} and Theorem \ref{theorem2} the existence of various
 complex multi-phase solutions of (\ref{elliptic system}) including several types of lattice
 solutions.  A characterization of all homomorphisms between reflection groups in general
 dimensions is not known. For the special case $n=m=2$, in the Appendix, we determine
 all positive homomorphisms between finite reflection groups and the corresponding solutions of \eqref{elliptic system}.

The paper is organized as follows. In Sec.2 we discuss the notion of $f$-equivariance, the
 concept of {\it positive} homomorphism $f:G\rightarrow\Gamma$ and give a few
 examples. In Sec.3 we list the hypotheses on the potential $W$ and state the main results,
 Theorem \ref{theorem1} and Theorem \ref{theorem2}, that we prove in Sec.4. In Sec.5 and
 in Sec.6 we apply the abstract theorems proved in Sec.4 to specific situations and present a series of solutions of (\ref{elliptic system}).

We denote by $\langle x,y\rangle$ the standard inner product of $x, y\in\R^k,\;k\geq 1$, by $\vert x\vert=\sqrt{\langle x,x\rangle}$ the norm of $x\in\R^k$ and by $d(x,A)=\inf_{y\in A}\vert y-x\vert$ the distance of $x$ from $A\subset\R^k$.

\section{$f$-Equivariance and the notion of positive homomorphism}\label{sec-2}

Here and in the following section, together with the abstract concepts and general proofs,
 we present simple but significant examples and special cases to help the reader to build a
 deep understanding of the paper.
We begin with some  examples of $f$-equivariant maps. We let $I_k$ the identity map of $\R^k,\;k\geq 1$. As a first example we observe
 that, in the particular case where $\Gamma=G$ and the homomorphism $f$ is the
 identity, $f$-equivariance reduces to the notion considered in \cite{bronsard-gui-schatzman}, \cite{gui-schatzman}, \cite{alikakos-fusco}, \cite{alikakos}, and \cite{fusco}:
\[u(g x)=g u(x),\;\text{ for }\;g\in G,\;x\in\R^n.\]
The next example is a genuine $f$-equivariant map.
In \cite{bates-fusco}, under the assumption that $W$ is invariant under the group $\Gamma$ of the equilateral triangle, we constructed a solution $ u: \R^2 \to \R^2$ to system \eqref{elliptic system} such that
\begin{itemize}
\item[(i)] $u(\gamma x) = \gamma u(x)$ for all $\gamma\in\Gamma$  (which is the dihedral group $D_3$).
\item[(ii)] $u(-x)=u(x)$ for all $x\in \R^2$.
\end{itemize}
If we incorporate the additional symmetry $(ii)$ in a group structure, this solution can be
 seen as an $f$-equivariant map. Indeed, the regular hexagon reflection group $G=D_6$
 contains $\Gamma=D_3$, and the antipodal map $\sigma:\R^2 \to \R^2$ given by
 $\sigma(x)=-x$. Since $\sigma$ commutes with the elements of $D_3$, $G$ is
 isomorphic to the group product $\{I_2,\sigma \} \times D_3$. Furthemore, we can define a homomorphism $f: D_6=\{I_2
,\sigma \} \times D_3 \to D_3$, by setting $f(\gamma)=\gamma$ and $f( \sigma \gamma)=\gamma$, for every $\gamma \in D_3$. Then, the above conditions $(i)$ and $(ii)$ express the $f$-equivariance of the solution $u$ in \cite{bates-fusco}.

Similarly, we can consider the action on $\R^2$ of the discrete reflection group $G'$
 generated by the reflections $s_1$, $s_2$ and $s_3$ with respect to the correponding
 lines $P_1:=\{ x_2=0 \}$, $P_2:=\{x_2=x_1/ \sqrt 3 \}$ and $P_3:=\{x_2=-\sqrt 3
 (x_1-1)\}$ (the dashed lines in Figure \ref{figure1}). These three lines bound a triangle
 $F'$ with angles 30, 60 and 90 degrees, which is a fundamental domain of $G'$. The
 discrete group $G'$ contains also all the reflections with respect to the lines drawn in Figure \ref{figure1}, which partition the plane into triangles congruent to $F'$.

\begin{figure}[h]
\begin{center}
\includegraphics[scale=.7]{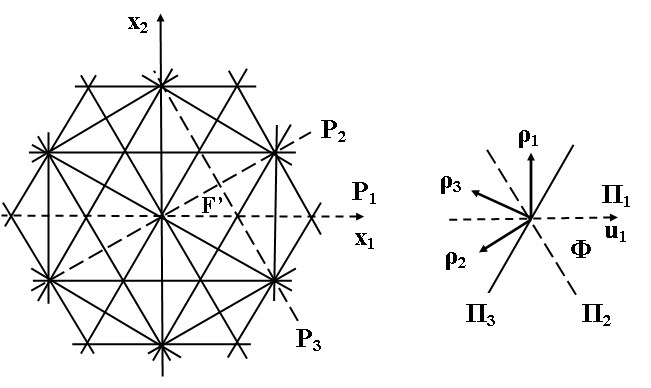}
\end{center}
\caption{The discrete reflection group $G'$ on the left and the finite reflection group $\Gamma=D_3$ on the right.}
\label{figure1}
\end{figure}

 The point group of $G'$, that is the stabilizer of the origin: $\{g\in G^\prime:g(0)=0\}$, is
 the group $G=D_6$, and we have $G'=TG$, where $T$ is the translation group of $G'$.
$T$ is generated by the translations $t^{\pm}$ by the vectors $(\frac{3}{2}, \pm \frac{\sqrt 3}{2})$. Now, if we compose the canonical homomomorphism $p: G' \to G$ such that
 $p(tg)=g$ for every $t \in T$ and $g \in G$, with the homomorphism $f: D_6 \to D_3$
 defined in the previous paragraph, we obtain a homomorphism $f': G' \to D_3$. We have in particular
\begin{equation}\label{f-pos}
\begin{split}
& f^\prime(s_1)=f(p(s_1))=s_1,\\
& f^\prime(s_3)=f(p(s_3))=p(s_3),\\
& f^\prime(s_2)=f^\prime(\sigma p(s_3))=f(\sigma p(s_3))=p(s_3),\\
\end{split}
\end{equation}
where $p(s_3)$ is the reflection in the line $\Pi_2=\{u_2=-\sqrt{3} u_1\}$. We note that the image of the line $P_1=\{x_2=0 \}$ by an $f'$-equivariant map $u:\R^2 \to \R^2$ is
 contained in the line $\Pi_1:=\{u_2=0 \}$ while the images of the lines   $P_2=\{x_2=x_1/
 \sqrt 3 \}$ and $P_3=\{ x_2=-\sqrt 3 (x_1-1) \}$ are contained in the line  $\Pi_2:=\{u_2=-\sqrt 3 u_1 \}$. Indeed
\begin{equation}
\begin{split}
&x=s_1 x\quad\Rightarrow\quad u(x)=u(s_1 x)= f^\prime(s_1)u(x)=s_1u(x),\\
&x=s_2 x\quad\Rightarrow\quad u(x)=u(s_2 x)= f^\prime(s_2)u(x)=p(s_3)u(x),\\
&x=s_3 x\quad\Rightarrow\quad u(x)=u(s_3 x)= f^\prime(s_3)u(x)=p(s_3)u(x).\\
\end{split}
\end{equation}
The lines $\Pi_1$ and $\Pi_2$ define a 60 degree sector $\Phi$ which is a fundamental domain of the finite reflection group $D_3$. At a later stage, we will prove the existence of a solution to  \eqref{elliptic system} that maps the triangle $F'$ in this sector.

 Now we return to the general setting and discuss the notion of positive homomorphism $f:G\to\Gamma$ between reflection groups $G$ and $\Gamma$. Before giving the
 definition we observe that if $s\in G$ is a reflection we have $I_m=f(I_n)=f(s)f(s)$. This and the fact that $f(s)$ is an orthogonal
 transformation imply that $f(s)$ is symmetric. Thus $f(s)$ has $m$ ortho-normal eigenvectors $\nu_1,\dots,\nu_m$ and
 $\nu_j=f(s)f(s)\nu_j=\mu_j^2\nu_j$ implies that $\vert \mu_j\vert=1$ for the
 corresponding eigenvalues $\mu_j,\;j=1,\dots,m$. Therefore if we let $E\subset\R^m$ be
 the span of the eigenvectors corresponding to the eigenvalue $\mu=1$, that is,
 $E=\ker(f(s)-I_m)$ the set of the points fixed by $f(s)$, we have $\R^m=E\oplus E^\bot$ and
  \begin{equation}\label{generalized-projection}
  f(s)u=f(s)(u_E+(u-u_E))=u_E-(u-u_E)=-u+2u_E,
 \end{equation}
 where we have used the decomposition $u=u_E+(u-u_E)$ with $u_E\in E$ and $u-u_E\in E^\bot$. We can interpret  (\ref{generalized-projection}) by saying that $f(s)$ is a
 projection with respect to the subspace $E$ or that $f(s)$ coincides with $I_m$ on $E$ and
 with the antipodal map on $E^\bot$.


\begin{definition}\label{positive homomorphism}
Let $F$ be a fundamental domain of $G$, bounded by the hyperplanes $P_1, \ldots ,P_l$, correponding to the reflections $s_1, \ldots, s_l$. We say that a homomorphism $f: G \to
 \Gamma$ is {\em positive} if there exists a fundamental domain $\Phi$ of $\Gamma$,
 bounded by the hyperplanes $\Pi_1,\ldots, \Pi_k$, such that for every $i=1, \ldots,l$,
 there is $1\leq k_i\leq k$  and $\tilde{\Pi}_1,\ldots,\tilde{\Pi}_{k_i}\in\{\Pi_1,\ldots, \Pi_k\}$ such that
\begin{equation}\label{positive-hom}
\ker(f(s_i)-I_m)=\cap_{j=1}^{k_i}\tilde{\Pi}_j.
\end{equation}
That is, the set of points fixed by the orthogonal map $f(s_i)$ is one of the hyperplanes $\Pi_j$, or the intersection of several of them.
\end{definition}

The property of being positive for a homomorphism  $f$ is independent of the choice of $F$. Indeed, if we take $\hat{F}=gF$, with $g \in G$, then $\hat{F}$
is bounded by the hyperplanes  $gP_1, \ldots ,gP_l$, correponding to the reflections $gs_1g^{-1}, \ldots, gs_lg^{-1}$. In addition $\ker(f(gs_ig^{-1})-I_m)=f(g) \ker(f(s_i)
-I_m)$, thus, the fundamental domain $\hat{\Phi}=f(g)\Phi$ can be associated with
 $\hat{F}$, in accordance with the definition.

Note that the choice of $\Phi$ is not unique, since the homomorphism $f$ can associate $F$ to $\Phi$, or to $-\Phi$ indifferently.

 The homomorphism $f': G' \to D_3$ defined above is an example of a positive homomorphism. Indeed, if we identify $F$ with the triangle $F^\prime$ and $\Phi$ with
 the $60$ degree sector $\{-\sqrt{3}u_1<u_2<0,\;u_1>0\}$ bounded by the lines $\Pi_1$ and $\Pi_2$, then (\ref{f-pos}) expresses the positivity of $f^\prime$.

  It is not true in general that a homomorphism $f:G\rightarrow\Gamma$ between reflection groups $G$ and $\Gamma$ is positive. For example the canonical projection $p$
 of a discrete reflection group $G'$ onto its point group $G$ does not, in general, fulfill this
 requirement. To see this, let us revisit the discrete reflection group $G'$ depicted in Figure \ref{figure1}. We have
   \[\begin{split}
  &p(s_1)=\text{ reflection in the line }\;\Pi_1,\\
  &p(s_2)=\text{ reflection in the line }\;\{u_2=u_1/\sqrt{3}\},\\
  &p(s_3)=f(s_3)=\text{ reflection in the line }\;\{u_2=-\sqrt{3}u_1\},
  \end{split}\]
   then $p(s_i)$,\,$i=1,2,3$ are reflections with respect to three distinct lines intersecting at the origin. Thus, the canonical projection $p: G' \to G=D_6$ cannot associate $F'$ to any fundamental domain of $D_6$ (a 30 degree sector).

\section{The theorems}
We assume:
\begin{hypothesis}[Homomorphism]\label{h1}
 There exist: a finite (or discrete) reflection group $G$  acting on $\R^n$, a finite reflection group $\Gamma$ acting on $\R^m$, and a homomorphism $f : G \to \Gamma$ which
is positive in the sense of Definition \ref{positive homomorphism}. We denote by $\Phi$  the fundamental domain of $\Gamma$
that $f$ associates with the fundamental domain $F$ of $G$.
\end{hypothesis}

\begin{hypothesis}[]\label{h2}
The potential $W: \R^m \to [0,\infty)$, of class $C^3$, is invariant under the finite reflection group $\Gamma$, that is,
\begin{equation}\label{g-invariance}
W(\gamma u) = W(u), \text{ for all } \gamma \in \Gamma \text{ and } u \in \R^m.
\end{equation}
Moreover, we assume that there exists $M>0$ such that
$W(su) \geq W(u)$, for $s\geq 1$ and $|u|=M.$
\end{hypothesis}

\begin{hypothesis}[ ]\label{h3}
There exist $a\in\overline{\Phi}$,  the closure of $\Phi$, and $q^*>0$ such that:
\begin{enumerate}
\item $0=W(a)<W(u),\;\text{ for }\; u\in\overline{\Phi}\setminus\{a\}$ and
\item for each $\nu\in\SF^{m-1}$, $\SF^{m-1}\subset\R^m$ the unit sphere, the map
$(0,q^*]\ni q\rightarrow W(a+q\nu)$ has a strictly positive second derivative.
\end{enumerate}.
\end{hypothesis}

Hypothesis \ref{h2} and \ref{h3} determine the number $N$ of  minima of $W$. From Hypothesis \ref{h2} we have
\[W(\gamma a)=0,\;\text{ for }\;\gamma\in\Gamma.\]
Therefore, if $a\in\Phi$, that is, $a$ is in the interior of $\Phi$, from the fact that $\gamma\Phi\neq\Phi$ for $\gamma\in\Gamma\setminus\{I_m\}$ it follows that $W$ has
 exactly $N=\vert\Gamma\vert$ distinct minima, where $\vert\mathcal{G}\vert$ denotes
 the order of a group $\mathcal{G}$. If $a\in\partial\Phi$ then the stabilizer
 $\Gamma_a=\{\gamma\in\Gamma:\gamma a=a\}$ of $a$ is nontrivial and we have
 $N=\vert\Gamma\vert/\vert\Gamma_a\vert<\vert\Gamma\vert$ and $a$ is the unique minimum of $W$ in the cone $\mathcal{D}\subset\R^m$ defined by
\[\mathcal{D}=\text{ Int }\,\cup_{\gamma\in\Gamma_a}\gamma\overline{\Phi}.\]
The set $\mathcal{D}$ satisfies
\begin{equation}\label{d-property}
\text{ for }\;\gamma\in\Gamma:\;\text{ either }\;\gamma\mathcal{D}\cap\mathcal{D}=\varnothing\;\text{ or }\;\gamma\mathcal{D}=\mathcal{D}.
\end{equation}
It follows that
\[\R^m=\cup_{\gamma\in\Gamma}\gamma\overline{\mathcal{D}},\]
that is, $\R^m$ is partitioned into $N=\vert\Gamma\vert/\vert\Gamma_a\vert$ cones congruent to $\mathcal{D}$.
The cone $\mathcal{D}\subset\R^m$ has its counterpart in the set $D\subset\R^n$ given by
\begin{eqnarray}\label{d-set}
D= \mathrm{Int}\left( {\cup_{g\in f^{-1}(\Gamma_{a})} g \overline{F}} \right)
\end{eqnarray}
which is mapped into $\overline{\mathcal{D}}$ by any positive $f$-equivariant map $u
:\R^n\to\R^m$.  Indeed, for such a map (\ref{positivity-1}) implies that
 $u(g\overline{F})\subset\overline{\mathcal{D}}$ if and only if $f(g)\in\Gamma_a$ or equivalently $g\in f^{-1}(\Gamma_a)$.



In the case   $G=\Gamma$ and $f=I$, the expression for $D$ reduces to $D= \mathrm{Int}\left( {\cup_{g\in \Gamma_{a}} g \overline{F}} \right),$  considered in \cite{alikakos-fusco}, \cite{alikakos}, and \cite{fusco}.

The set $D$ satisfies the analog of (\ref{d-property}). Therefore also the domain space $\R^n$ is partitioned into $N=\vert\Gamma\vert/\vert\Gamma_a\vert$ sets congruent to $D$.

Let us consider a few examples
\begin{enumerate}
\item if $G=\Gamma$, $f$ is the identity and $a$ is in the interior of $\Phi$, then $\mathcal{D}=\Phi$ and $D=F$.
\item if $m=n=2$, $G=\Gamma=D_3$, $\Phi=\{u: 0<u_2<\sqrt{3}u_1,\;u_1>0\}$, $f$ is
 the identity and $a=(1,0)$, then $\Gamma_a=\{I_2,g_1\}$ where $g_1$ is the reflection in
 the line $\{u_2=0\}$. Therefore (\ref{d-set}) yields $D=\mathrm{Int}\left(\overline{F}\cup g_1\overline{F}\right)=\{x:\vert x_2\vert<\sqrt{3}x_1,\;x_1>0\}$.
\item if $n=m=2$, $G=D_6=\{I_2,\sigma\}\times D_3,\;\Gamma=D_3$, $f(\gamma)=\gamma$ and $f(\sigma\gamma)=\gamma$ for every $\gamma\in D_3$,
 and $a\in\Phi=\{u: 0<u_2<\sqrt{3}u_1,\;u_1>0\}$, then $\Gamma_a=\{I_2\}$ and $f^{-1
}(\Gamma_a)=\{I_2,\sigma\}$. Therefore $\mathcal{D}=\Phi$ and
 $D=\mathrm{Int}\left(\overline{F}\cup \sigma\overline{F}\right)=\{x:0<-\frac{x_1x_2
}{\vert x_1\vert}<\frac{\vert x_1\vert}{\sqrt{3}},\; x_1\neq 0\}$. ($F=\{x:0<-x_2<\frac{x_1}{\sqrt{3}}, x_1>0\}$)
\item If in the previous example we take $a=(1,0)\in\overline{\Phi}$ we have
 $\Gamma_a=\{I_2,g_1\}$ and  $f^{-1}(\Gamma_a)=\{I_2,\sigma,g_1,\sigma g_1\}$. It
 follows $\mathcal{D}=\mathrm{Int}\left(\overline{\Phi}\cup g_1\overline{\Phi}\right)=\{0<\vert u_2\vert<\sqrt{3}u_1,\;u_1>0\}$ and
    $D=\mathrm{Int}\left(\overline{F}\cup \sigma\overline{F}\cup g_1\overline{F} \cup
 \sigma g_1 \overline{F} \right)=\{x:0< \vert x_2 \vert<\frac{\vert x_1\vert}{\sqrt{3}} \}$.
\end{enumerate}

​

\begin{figure}[h]

\begin{center}

\includegraphics[scale=0.44]{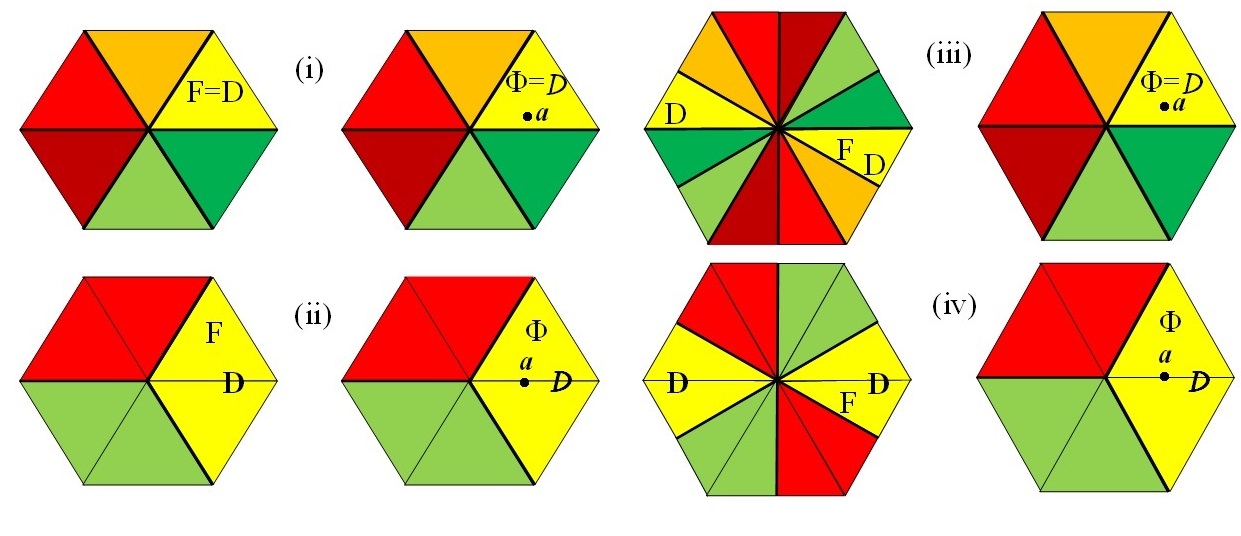}

\end{center}

\caption{The sets $F$, $\Phi$, $D$ and $\mathcal{D}$ and their correspondence by an $f$-equivariant map in the the examples (i)-(iv).}

\label{figurenew}

\end{figure}


If $G$ is a discrete (infinite) group, then $D$ has infinitely many connected components.
As examples (iii) and (iv) above show, even when $G$ is a finite group, $D$ does not need
 to be connected. To characterize one of the connected components of $D$, let
 $G^a\subset f^{-1}(\Gamma_a)$ be the subgroup generated by $f^{-1}(\Gamma_a)\cap\{s_1,\dots,s_l\}$ and define
\begin{eqnarray}\label{d0-set}
D_0:= \mathrm{Int}\cup_{g\in G^a} g \overline{F}.
\end{eqnarray}
Since $G^a$ is a reflection group generated by a subset of $\{s_1,\dots,s_l\}$, the set of
 reflections in the planes that bound $F$, the set $D_0$ is connected. To show that $D_0$
 is one of the connected components of $D$ we show that $D_0$ and $D\setminus D_0$
 are disconnected. This is equivalent to prove that if $s$ is the reflection in one of the
 planes that bound $D_0$, then $s\not\in f^{-1}(\Gamma_a)$. The definition of $D_0$
 implies $s=s_is^0s_i$ for some $s_i\in G^a$ and some reflection $s^0\in\{s_1,\dots,s_l\}\setminus f^{-1}(\Gamma_a)$ and therefore $s^0=s_iss_i\;\;\Rightarrow\;\;
 s^0\in G^a$ if $s\in f^{-1}(\Gamma_a)$.

For examples (i)-(iv) we have: $F=D_0=D$ in (i);\;$F\subsetneq D_0=D$ in (ii);\
;$F=D_0\subsetneq D=D_0\cup\sigma D_0$ in (iii);\;$F\subsetneq D_0=\mathrm{Int}\left(\overline{F}\cup g_1\overline{F}\right)\subsetneq D=D_0\cup\sigma D_0$ in (iv).  Figure 2 illustrates these properties.

We are now in a position to state the main results.
\begin{theorem}\label{theorem1}
Under Hypotheses \ref{h1}--\ref{h3}, and assuming that $G$ is a finite reflection group, there exists an $f$-equivariant classical solution $u$ to system \eqref{elliptic system}, and a positive decreasing map $q:[0,+\infty)\rightarrow\R$ with $\lim_{r\rightarrow+\infty}q(r)=0,$ such that
\begin{enumerate}
\item $u(\overline F) \subset \overline \Phi$ and $u(\overline{D})\subset\overline{\mathcal{D}}$,
\item $|u(x)-a| \leq q(d(x,\partial D)),\;\text{ for }\;x\in D$,
\item if the matrix $D^2W(a)$  is positive definite, then $q(r)\leq Ke^{-kr}$ for some constants $k, K>0$ and therefore
$|u(x)-a|\leq Ke^{-k d(x,\partial D)},\;\text{ for }\;x\in D$.
\end{enumerate}
\end{theorem}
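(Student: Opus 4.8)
The plan is to construct the entire solution as a locally uniform limit of minimizers of the energy \eqref{functional} on large balls, taken over the class of $f$-equivariant maps obeying the positivity condition \eqref{positivity}, and to invoke Lemma \ref{positivity flow} to show that this constraint leaves the Euler--Lagrange equation \eqref{elliptic system} intact.

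\textbf{Step 1 (Constrained minimizers on balls).} For $R>0$ let $B_R\subset\R^n$ be the ball of radius $R$ and let $\mathcal{A}_R$ denote the class of $u\in W^{1,2}(B_R,\R^m)$ that are $f$-equivariant and satisfy $u(\overline F\cap B_R)\subset\overline\Phi$. Equivalently, one minimizes the (fractional) energy on $F\cap B_R$ subject to the wall conditions $u(P_i)\subset\ker(f(s_i)-I_m)$, which are forced by equivariance through \eqref{positive-hom}. Since $\overline\Phi$ is a convex cone, the positivity constraint is convex and $\mathcal{A}_R$ is weakly closed. Because $W\ge0$, the functional $J_{B_R}$ controls $\|\nabla u\|_{L^2}$; the coercivity clause of Hypothesis \ref{h2}, $W(su)\ge W(u)$ for $s\ge1$ and $|u|=M$, lets me radially truncate any minimizing sequence into $\overline{B_M}\subset\R^m$ without increasing the energy and without leaving $\mathcal{A}_R$ (the truncation commutes with $\Gamma$). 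This gives a uniform $L^\infty$, hence $W^{1,2}$, bound, so weak lower semicontinuity and the direct method produce a minimizer $u_R\in\mathcal{A}_R$ with $\|u_R\|_\infty\le M$ uniformly in $R$.

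\textbf{Step 2 (The constraint is inactive: $u_R$ solves the equation).} This is the heart of the argument, since a priori a minimizer over the constrained class $\mathcal{A}_R$ need only satisfy a variational inequality along $\partial\Phi$. To upgrade this to \eqref{elliptic system}, I run the $L^2$-gradient (parabolic) flow of $J_{B_R}$ starting from $u_R$. By Lemma \ref{positivity flow} the flow preserves the positivity \eqref{positivity}, and it plainly preserves $f$-equivariance while decreasing the energy; as $u_R$ already minimizes $J_{B_R}$ over $\mathcal{A}_R$ the flow cannot lower the energy, so $u_R$ is stationary, i.e. $\Delta u_R-W_u(u_R)=0$ in the interior of $B_R$. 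Interior elliptic regularity, bootstrapping with $W\in C^3$, then yields $u_R\in C^{2,\alpha}_{\mathrm{loc}}$; in particular $u_R$ is a classical solution across the walls $P_i$ (which lie in the interior of $B_R$), so no Lagrange-multiplier term survives along $\partial\Phi$.

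\textbf{Step 3 (Uniform pointwise estimate and the limit).} On the cone $D$ of \eqref{d-set}, property \eqref{positivity-1} forces $u_R(\overline D\cap B_R)\subset\overline{\mathcal D}$, a region in which, by Hypothesis \ref{h3}, $a$ is the \emph{unique} zero of $W$; thus the relevant potential is single-welled there and the pointwise estimate of \cite{fusco} and \cite{f} applies, furnishing a positive decreasing $q$ with $q(r)\to0$ and $|u_R(x)-a|\le q(d(x,\partial D))$, with $q$ independent of $R$. When $D^2W(a)$ is positive definite, the resulting strict convexity of $W$ near $a$ upgrades this to $q(r)\le Ke^{-kr}$. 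The uniform bound $\|u_R\|_\infty\le M$ together with interior Schauder estimates gives uniform $C^{2,\alpha}$ bounds on compact subsets of $\R^n$, so by Arzel\`a--Ascoli a subsequence $u_{R_k}\to u$ in $C^2_{\mathrm{loc}}$ to an entire classical solution. Equivariance, positivity, $u(\overline D)\subset\overline{\mathcal D}$, and the estimates in (2)--(3) pass to the limit; the uniform estimate also pins $u$ near $a$ deep inside $D$, so the limit is non-degenerate. The main obstacle is Step 2: rigorously ruling out a multiplier along $\partial\Phi$ and ensuring the equivariant map solves \eqref{elliptic system} across the walls. This relies entirely on the positivity of $f$ via Lemma \ref{positivity flow}, whose proof is the maximum-principle argument of \cite{smyrnelis} and \cite{alikakos-smyrnelis}; one must also check that the boundary condition chosen on $\partial B_R$ is compatible with both the flow and the uniform estimates.
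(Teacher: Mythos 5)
Your proposal follows essentially the same route as the paper: constrained minimization over the class of $f$-equivariant, positive maps on balls $B_R$, removal of the constraint by running the $L^2$-gradient flow (with Neumann data on $\partial B_R$) from the minimizer and invoking Lemma \ref{positivity flow} together with energy monotonicity, then the uniform pointwise estimate of Theorem \ref{teo-2} on $D_0$ and passage to the limit $R\to\infty$ by elliptic estimates and Arzel\`a--Ascoli. The only detail you leave implicit is that Lemmas \ref{equivariance flow} and \ref{positivity flow} require smooth initial data, so one must first approximate $u_R$ by smooth equivariant positive maps (Lemma \ref{approximation}) and use continuous dependence of the flow, exactly as the paper does.
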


\begin{theorem}\label{theorem2}
Under Hypotheses \ref{h1}--\ref{h3}, and assuming that $G$ is a discrete reflection group, there exists for every $R>R_0$  (a positive constant), a nontrivial $f$-equivariant classical solution $u_R$ to system
\begin{equation}\label{elliptic system R}
\Delta u_R - R^2 W_u(u_R) = 0,\,\,\,\hbox{for}\,\, x\in \R^n
\end{equation}
such that
\begin{enumerate}
\item $u_R(\overline {F}) \subset \overline \Phi$ and $u_R(\overline{D})\subset\overline{\mathcal{D}}$.
\item $|u_R(x)-a| \leq q(R d(x,\partial D)),\;\text{ for }\;x\in D$
where $q$ is as in Theorem \ref{theorem1},
\item $|u_R(x)-a| \leq K \mathrm{e}^{-kR d(x,\partial D)}$, for $x \in D$, \\for positive constants $k$, $K$, \medskip
provided $D^2W(a)$  is  positive definite.
\end{enumerate}
\end{theorem}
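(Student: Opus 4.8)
The plan is to obtain $u_R$ as the $f$-equivariant extension of a minimizer of the rescaled energy $J_F^R(v) := \int_F \left( \frac{1}{2}|\nabla v|^2 + R^2 W(v)\right) \dd x$ over a constrained class on the (bounded) fundamental domain $F$ of the discrete group $G$. As in the finite case treated in Theorem \ref{theorem1}, the decisive structural fact is that, because $f$ is positive, Lemma \ref{positivity flow} guarantees that the condition \eqref{positivity} is preserved by the $L^2$-gradient flow; consequently a constrained minimizer is an unconstrained critical point and hence a genuine solution of \eqref{elliptic system R}, with no reaction term coming from the constraint. The one structural difference from Theorem \ref{theorem1} is that, $G$ being infinite, $F$ is bounded, the energy is taken only over $F$, and the scaling parameter $R$ plays the role that the unbounded extent of the cone $D$ plays for a finite group.

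First I would set up the admissible class. Since an $f$-equivariant map is determined by its restriction to $\overline{F}$, and since on each wall $P_i$ equivariance forces $u(x)=f(s_i)u(x)$, i.e.\ $u(x)\in\ker(f(s_i)-I_m)$ when $x\in P_i$, I minimize $J_F^R$ over
\[
\mathcal{A}_R := \{\, v \in W^{1,2}(F,\R^m) : v(\overline{F}) \subset \overline{\Phi},\ \ v|_{P_i \cap \overline{F}} \subset \ker(f(s_i)-I_m)\ \text{for } i=1,\dots,l \,\}.
\]
Because $\overline{\Phi}$ is a closed convex cone and each $\ker(f(s_i)-I_m)$ is a linear subspace, $\mathcal{A}_R$ is convex and weakly closed. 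An $L^\infty$ bound comes from Hypothesis \ref{h2}: the radial truncation $v\mapsto \min(1,M/|v|)\,v$ maps $\mathcal{A}_R$ into itself (the cone and the subspaces are invariant under radial scaling) and does not increase $J_F^R$ (it is $1$-Lipschitz and $W(sv)\ge W(v)$ for $s\ge 1$, $|v|=M$), so I may minimize over $\{|v|\le M\}$. The direct method then yields a minimizer $v_R$, whose equivariant extension $u_R$ lies in $W^{1,2}_{\mathrm{loc}}(\R^n,\R^m)$, the wall conditions ensuring that the reflection extension has matching traces across each $P_i$. That $u_R$ solves \eqref{elliptic system R} classically is then the content of Lemma \ref{positivity flow} together with interior elliptic regularity (using $W\in C^3$): the positivity-preserving flow is stationary at the minimizer, so no obstacle term appears, and the equivariant extension across the walls is a weak, hence smooth, solution. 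Property (1) is immediate from \eqref{positivity-1}: for $g\in f^{-1}(\Gamma_a)$ one has $f(g)\in\Gamma_a$, so $u_R(g\overline{F})\subset f(g)\overline{\Phi}\subset\overline{\mathcal{D}}$, and $\overline{D}=\cup_{g\in f^{-1}(\Gamma_a)}g\overline{F}$ gives $u_R(\overline{D})\subset\overline{\mathcal{D}}$.

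For the pointwise bounds (2) and (3) I would pass to the unscaled equation by the rescaling $w(y):=u_R(y/R)$, which solves $\Delta w - W_u(w)=0$ and minimizes the unscaled energy $J$ over the correspondingly dilated class on the large bounded domain $RF$. On $RD_0$ (the dilated copy of the connected component $D_0$ of \eqref{d0-set}) the map $w$ takes values in $\overline{\mathcal{D}}$, where by Hypothesis \ref{h3} $a$ is the unique zero of $W$; thus $w|_{RD_0}$ is exactly of the type governed by the pointwise estimate of \cite{fusco} and \cite{f}, yielding $|w(y)-a|\le q(d(y,\partial(RD)))$ for a positive, decreasing $q$ with $q(r)\to 0$. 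Undoing the scaling, $d(y,\partial(RD))=R\,d(x,\partial D)$ with $y=Rx$, so $|u_R(x)-a|\le q(R\,d(x,\partial D))$, which is (2); when $D^2W(a)$ is positive definite the same references upgrade $q$ to $q(r)\le K\mathrm{e}^{-kr}$, giving (3).

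Finally I would establish nontriviality for $R>R_0$ by an energy comparison. The constant $v\equiv 0$ lies in $\mathcal{A}_R$ (the origin is the apex of $\overline{\Phi}$ and is fixed by every $f(s_i)$) and has energy $R^2 W(0)|F|$, of order $R^2$, with $W(0)>0$ in the genuinely multiphase case $a\neq 0$. A competitor equal to $a$ away from $\partial F$ and interpolating to the wall constraints in layers of width $\sim 1/R$ has energy of order $R$; hence for $R$ larger than some $R_0$ the minimal energy is $O(R)\ll R^2 W(0)|F|$, so the minimizer cannot be the trivial constant and $u_R$ genuinely varies. The main obstacle is not any single estimate but the reduction carried out above: verifying that the constrained minimizer truly solves \eqref{elliptic system R} across the walls with no reaction term, which rests entirely on the positivity of $f$ through Lemma \ref{positivity flow}, and checking that $w|_{RD_0}$ is a bona fide local minimizer into $\overline{\mathcal{D}}$ so that the external estimate of \cite{fusco} and \cite{f} applies.
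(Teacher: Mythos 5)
Your proposal follows essentially the same route as the paper's proof: minimize the rescaled energy $J_F^R$ over the class of $f$-equivariant maps satisfying the positivity constraint $u(\overline F)\subset\overline\Phi$, invoke the positivity-preserving gradient flow of Lemma \ref{positivity flow} to conclude that the constrained minimizer is an unconstrained critical point and hence a classical solution of \eqref{elliptic system R}, and then rescale via $v_R(y)=u_R(y/R)$ so that the pointwise estimate of \cite{f} and \cite{fusco}, applied on the dilated set $D_0^R$, yields (ii) and (iii) with $R_0=r(\bar q)$. The one case you silently exclude is that of a discrete reflection group whose fundamental domain is unbounded (e.g.\ $F=\{x\in\R^n:0<x_1<1\}$ for $n>1$); the paper covers it by writing $F=F_\nu\times\R^d$ with $F_\nu$ bounded, minimizing in the lower-dimensional factor, and setting $u_R(x)=v_R(x_\nu)$. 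Your explicit energy-comparison argument for nontriviality is an addition not spelled out in the paper, which instead obtains nonconstancy for $R>R_0$ directly from estimate (ii) combined with $f$-equivariance (the solution is near $a$ deep inside $D_0$ and near $f(g)a\neq a$ in suitable translates), and your argument as stated only rules out constants $c$ with $W(c)>0$, so it still needs the same equivariance observation to exclude the degenerate constant $a$.
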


The solution $u_R$ of (\ref{elliptic system R}) given by Theorem \ref{theorem2} is periodic.
 We describe this periodic structure of $u_R$ under the assumption that the positive
 homomorphism $f$ satisfies \footnote{ For an example of a homomorphism that does not
 satisfy (\ref{f-mapsto-i}) take $\Gamma=\{I_m,\gamma\}$ with $\gamma$ the reflection
 in the plane $\{u_1=0\}$ and $G=\langle s_j\rangle_{j\in\Z}=T G_0$ where $s_j$ is the
 reflection in the plane $\{x_1=j\}$, $T$ is the translation group generated by the translation $t_0$ by the vector $(2,0,\ldots,0)$ and $G_0=\{I_n, s_0\}$. Define $f:G\rightarrow\Gamma$ by
\begin{equation*}
\begin{split}
& f(t_0)=f(s_{2 j})=\gamma,\\
& f(s_{2 j+1})=I_m.
\end{split}
\end{equation*}
 }
\begin{equation}\label{f-mapsto-i}
f(t)=I_m,\;\text{ for }\;t\in T.
\end{equation}
This covers the examples that we present below. On the other hand we are not aware of positive homomorphisms that do not satisfy (\ref{f-mapsto-i}). Assuming (\ref{f-mapsto-i}), if $G=T G_0$ with $G_0$ the point group of $G$ and $T$ its translation group we have
\[\begin{split}
 u_R(t x)=u_R(x),\;\text{ for }\;t\in T,\;x\in C\end{split}\]
 where
\[ C=\mathrm{Int}\cup_{g\in G_0}g\overline{F}\hskip2.7cm\]
is the elementary {\it cell}. $C$ is a convex polytope that satisfies
\[t C\cap C=\varnothing,\;\text{ for }\;t\in T\setminus\{I_n\}\hskip.8cm\]
and defines a tessellation of $\R^n$ as the union of the translations $t C,\,t\in T$ of $C$: $\R^n=\overline{\cup_{t\in T}t C}$. In this sense we can say that $u_R$ has a {\it crystalline} structure and that $C$ is the elementary {\it crystal}.

Let us illustrate Theorem \ref{theorem2} with the help of the example described in Sec.2, where the discrete reflection group $G'$ acts on the domain $x$-plane while the finite reflection group $\Gamma=D_3$ acts on the target $u$-plane.
We have already verified that the homomorphism $f':G' \to \Gamma$ is positive
and therefore Hypothesis \ref{h1} is satisfied. When Hypotheses \ref{h2}--\ref{h3} also hold, Theorem \ref{theorem2} ensures the existence,
for every $R$ sufficiently large, of a nontrivial $f'$-equivariant solution $u_R$ to \eqref{elliptic system R}, such that
$u_R(\overline {F'}) \subset \overline \Phi$. By $f'$-equivariance, the other fundamental
 domains of $G'$ are mapped into fundamental domains of $\Gamma$ as in Figure \ref{figure2}. Properties (ii) and (iii) state that for every $x \in D$, $u_R(x)$ approaches as
 $R$ grows, the unique minimum $a$ of $W$ in $\overline \Phi$, with a speed that
 depends on $d(x,\partial D)$.
If for instance the potential $W$ has six minima (one in the interior of each fundamental domain of $D_3$), then the set $D$ is the union of
the fundamental domains of $G'$ with the same colour (cf. Figure \ref{figure2} left) and
 $\mathcal{D}$ is the sector with the same color of $D$ (cf. Figure \ref{figure2} right). If
 $a$ lies on the boundary of the fundamental domain of $D_3$, for instance $a=(1,0)$ then $\mathcal{D}$ is
the $120$ degree sector that contains $a$ and $D$ is the union of all fundamental
 domains (triangles) with the same colors of the two sectors that compose $\mathcal{D}$. For this example condition (\ref{f-mapsto-i}) is satisfied and the elementary crystal $C$, is the
 hexagon determined by  the fundamental domains (triangles) whose closure contains $0\in\R^2$.

\begin{figure}[h]
\begin{center}
\includegraphics[scale=0.7]{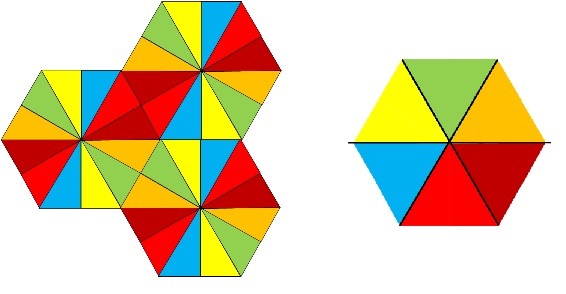}
\end{center}
\caption{Fundamental domains for the actions on $\R^2$ of $G^\prime$ (left) and $D_3$ (right).  The $f'$-equivariant solution $u_R$ of (\ref{elliptic system}) given by Theorem \ref{theorem2} maps fundamental domains into fundamental domains with the same color.}
\label{figure2}
\end{figure}

To give a first application of Theorem \ref{theorem1}, consider the  particular case where $G=D_6=\{I_2,\sigma \} \times D_3$, $\Gamma=D_3$, and $f$ the positive homomorphism  defined by $f(\gamma)=\gamma$, $f( \sigma \gamma)=\gamma$, for every $\gamma \in D_3$.
Figure \ref{figurenew} (iii) and (iv) shows the correspondence of the fundamental domains by $f$- equivariant solutions $u$ of (\ref{elliptic system}) when the potential $W$ has respectively six and three minima. The case of $W$ with three minima when $u$ has a six-fold structure, see Figure \ref{figurenew} (iv) was first considered in \cite{bates-fusco}. Other examples of application of Theorem \ref{theorem1} and Theorem \ref{theorem2} are given in Sec.5 and Sec.6.

\section{Proofs of Theorem \ref{theorem1} and Theorem \ref{theorem2}}
\begin{step}[Minimization] \end{step}
In the case where $G$ is a finite reflection group, we first construct the solution in a ball $B_R \subset \R^n$ of radius $R>0$ centered at
the origin. We set $F_R=F \cap B_R$ and define the class
$$A^R := \left\{ u \in W^{1,2}(B_R,\R^m) : u \text{ is } f \text{-equivariant and } u(\overline{F}_R) \subset \overline{\Phi} \right\}$$ in which we have imposed the positivity constraint $ u(\overline{F}_R) \subset \overline{\Phi}$. Then, we consider the minimization problem
\[
\min_{A^R} J_{B_R}, \text{ where } J_{B_R}(u) = \int_{B_R} \Big{(} \frac{1}{2} |\nabla u|^2 + W(u) \Big{)} \dd x.
\]
Since $A^R$ is convex and hence weakly closed in $W^{1,2}(B_R,\R^m)$, a minimizer $u_R$ exists, and because of Hypothesis \ref{h2} we can assume that
\begin{eqnarray}\label{below-emme}
|u_R(x)| \leq M.
\end{eqnarray}

In the case where $G$ is a discrete reflection group, we can work directly in the fundamental domain $F$. Suppose first that $F$ is bounded. Then, we consider the class
$$A:=\{u \in W^{1,2}_{\mathrm{loc}}(\R^n,\R^m) :  u \text{ is $f$-equivariant and such that $u(\overline{F}) \subset \overline{\Phi}$} \},$$
and choose a minimizer $u_R$ of the problem
\[
\min_{A} J_{F}^R, \text{ where } J_{F}^R (u) = \int_{F} \Big{(} \frac{1}{2} |\nabla u|^2 + R^2 W(u) \Big{)} \dd x,
\]
satisfying the estimate (\ref{below-emme}).

Now, suppose that $F$ is not bounded. This implies that all the reflection hyperplanes of $G$ are parallel to a subspace
$\{ 0 \}^{\nu} \times \R^d \subset \R^n$ (with $\nu + d=n$, $d \geq 1$), and that $G$ also acts in $\R^{\nu}$. Since $F=F_{\nu} \times \R^d$,
 with $F_{\nu} \subset \R^{\nu}$ bounded, we have, according to the preceding argument, a minimizer $v_R: \R^{\nu} \to \R^m$ of $$J_{F_{\nu}}^R (v) = \int_{F_{\nu}} \Big{(} \frac{1}{2} |\nabla_{x_{\nu}} v|^2 + R^2 W(v) \Big{)} \dd x_{\nu} ,$$
 in the analog of $A$ with $n$ replaced by $\nu$, that is, the class of $W^{1,2}_{\mathrm{loc}}(\R^\nu,\R^m)$ maps $v$, which are
$f$-equivariant and satisfy $v(\overline{F_{\nu}} ) \subset \overline \Phi$. Then, we set $u_R(x):=v_R(x_{\nu})$, where $x=(x_{\nu},x_d) \in \R^n$.

\begin{step}[Removing the positivity constraint with the gradient flow] \end{step}
To show that the positivity constraint built in $A^R$ (or $A$) does not affect the Euler-Lagrange equation we will utilize the gradient flow associated to the elliptic system.
In the case where $G$ is a finite reflection group we consider
\begin{equation}\label{evolution-problem1}
\begin{cases}
\dfrac{\partial u}{\partial t} = \Delta u - W_u(u), &\text{in } B_R \times (0,\infty), \bigskip \\
\dfrac{\partial u}{\partial \n} = 0, &\text{on } \partial B_R \times (0,\infty), \bigskip \\
u(x,0) = u_0(x), &\text{in } B_R,
\end{cases}
\end{equation}
where ${\partial}/{\partial \n}$ is the normal derivative.

In the case where $G$ is a discrete reflection group, we consider
\begin{equation}\label{evolution-problem2}
\begin{cases}
\dfrac{\partial u}{\partial t} = \Delta u - R^2 W_u(u), &\text{in } \R^n \times (0,\infty), \bigskip \\
u(x,0) = u_0(x), &\text{in } \R^n.
\end{cases}
\end{equation}
Since $W$ is $C^3$, the results in
\cite{henry} apply and provide a
unique solution to \eqref{evolution-problem1} (or \eqref{evolution-problem2}) which is
 smooth if we assume that $u_0$ is globally Lipschitz. In the next two lemmas we will
 establish that the gradient flow preserves the $f$-equivariance and the positivity of a smooth initial condition.

\begin{lemma}\label{equivariance flow}
Under Hypothesis \ref{h2}, if the initial condition $u_0$ is a smooth, $f$-equivariant map, then for every $t>0$, the solution $u(\cdot,t)$ of problem \eqref{evolution-problem1} (or \eqref{evolution-problem2}) is also $f$-equivariant.
\end{lemma}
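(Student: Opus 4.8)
The plan is to fix an arbitrary $g\in G$ and to reduce the statement to uniqueness for the parabolic flow by comparing $u$ with its transform
\[
v(x,t):=f(g)^{-1}u(gx,t).
\]
If I can show that $v$ solves exactly the same initial (boundary) value problem as $u$, then uniqueness of smooth solutions, which is guaranteed by \cite{henry} because $u_0$ is globally Lipschitz, forces $v\equiv u$. Unwinding the definition this reads $u(gx,t)=f(g)u(x,t)$ for all $t>0$, and since $g\in G$ was arbitrary this is precisely the $f$-equivariance of $u(\cdot,t)$.

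The key algebraic fact I would record first is the $\Gamma$-equivariance of the gradient $W_u$. Differentiating the invariance relation \eqref{g-invariance}, $W(\gamma u)=W(u)$, and using that every $\gamma\in\Gamma$ is orthogonal, so that $\gamma^{-1}=\gamma^{\top}$, one gets
\[
W_u(\gamma u)=\gamma\,W_u(u),\qquad \gamma\in\Gamma,\ u\in\R^m.
\]
Next I would check that $v$ satisfies the PDE. Each $g\in G$ acts by an isometry $x\mapsto gx$ (orthogonal in the finite case, an affine isometry in the discrete case), and the Laplacian is invariant under isometries, so $\Delta_x\!\left[u(gx,t)\right]=(\Delta u)(gx,t)$. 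Applying $f(g)^{-1}$ and using $f(g)\in\Gamma$ together with the equivariance of $W_u$ above gives
\[
\partial_t v-\Delta v=-f(g)^{-1}W_u\big(u(gx,t)\big)=-W_u\big(f(g)^{-1}u(gx,t)\big)=-W_u(v),
\]
and the same computation works in the discrete case with the harmless factor $R^2$ in front of $W_u$. The initial data match, since $v(x,0)=f(g)^{-1}u_0(gx)=u_0(x)$ by the $f$-equivariance of $u_0$.

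In the finite-ball case I also need the Neumann condition to be preserved. Since $g$ is orthogonal it maps $\partial B_R$ to itself and carries the outward normal $\n(x)=x/R$ to $\n(gx)$, so a short computation yields $\tfrac{\partial v}{\partial\n}(x)=f(g)^{-1}\tfrac{\partial u}{\partial\n}(gx)=0$. With the PDE, the initial condition, and (where present) the boundary condition all matching, and with $v$ inheriting the smoothness of $u$, the uniqueness theorem of \cite{henry} applies and gives $v\equiv u$. I do not expect a genuinely hard step here: the whole content is the symmetry-plus-uniqueness scheme. The only points that require care are the transformation of the Neumann condition in the finite case and the remark that the translations possibly present in a discrete $G$ still leave the Laplacian invariant; both are routine verifications rather than obstacles.
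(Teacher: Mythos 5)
Your proposal is correct and follows essentially the same route as the paper: the authors also set $v(x,t):=\gamma^{\top}u(gx,t)$ with $\gamma=f(g)$ (the same map as your $f(g)^{-1}u(gx,t)$ since $\gamma$ is orthogonal), verify that $v$ solves the same initial--boundary value problem using the isometry invariance of the Laplacian, the $\Gamma$-equivariance of $W_u$, and the preservation of the Neumann condition, and conclude by uniqueness of the flow.
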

\begin{proof} We only write the proof for \eqref{evolution-problem1}, since it is identical for \eqref{evolution-problem2}.
Let $g \in G$ and $\gamma:=f(g)\in \Gamma  < O(\R^m)$. We are going to check that for
 every $x \in B_R$ and every $t>0$, we have $u(gx,t)=\gamma u(x,t)$ or equivalently
 $u(x,t)=\gamma^{\top} u(gx,t)$.  Let us set $v(x,t):=\gamma^{\top} u(gx,t)$.  Since $g$ is
 an isometry, $\Delta v(x,t)=\gamma^{\top} (\Delta u)(gx,t)$. On the other hand, we have
 $\frac{\partial v}{\partial t}(x,t)= \gamma^{\top} \frac{\partial u}{\partial t}(gx,t)$, and
 utilizing the symmetry of the potential: $W_u (v(x,t))=\gamma^{\top} W_u(u(gx,t))$.
 Finally, we see that for $x \in \partial B_R$ and $t>0$, $\dfrac{\partial v}{\partial \n}(x,t)=\gamma^{\top}\dfrac{\partial u}{\partial  g \n }(gx,t)=0$. Thus, $v$ is also a smooth
 solution of  \eqref{evolution-problem1} with initial condition $v_0(x)=\gamma^{\top} u_0(gx)=u_0(x)$, and by uniqueness $u\equiv v$.
\end{proof}

\begin{lemma}\label{positivity flow}
Under Hypotheses \ref{h1}--\ref{h2}, and assuming that the initial condition $u_0$ is a smooth map, we have:
\begin{itemize}
\item $u_0 \in A^R \Rightarrow u(\cdot,t) \in A^R, \ \forall t>0$, when $G$ is a finite reflection group.
\item $u_0 \in A \Rightarrow u(\cdot,t) \in A, \ \forall t>0$, when $G$ is a discrete reflection group.
\end{itemize}
\end{lemma}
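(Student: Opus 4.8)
The plan is to reduce the vector positivity constraint to a family of scalar inequalities and to propagate each of them along the flow by the parabolic maximum principle. Write the chosen fundamental domain as $\overline{\Phi}=\{u\in\R^m:\langle u,n_j\rangle\ge0,\ j=1,\dots,k\}$, where $n_j$ is the inward unit normal to the wall $\Pi_j$; since $\Gamma$ is finite all the $\Pi_j$ pass through the origin, so $\Pi_j=\{u:\langle u,n_j\rangle=0\}$. For a map $v$, the condition $v(\overline{F}_R)\subset\overline{\Phi}$ is then equivalent to $w_j:=\langle v,n_j\rangle\ge0$ on $\overline{F}_R$ for every $j$. Thus it suffices to show that, along the flow \eqref{evolution-problem1} (resp. \eqref{evolution-problem2}), each scalar projection $w_j(\cdot,t):=\langle u(\cdot,t),n_j\rangle$ stays nonnegative on $\overline{F}_R$ (resp. $\overline{F}$) given that it is nonnegative at $t=0$. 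By Lemma \ref{equivariance flow} the solution remains $f$-equivariant, so the geometric relations used below are available at every time.

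First I derive the scalar equation. Projecting the parabolic system gives $\partial_t w_j=\Delta w_j-\langle W_u(u),n_j\rangle$ (with $W_u$ replaced by $R^2W_u$ in the discrete case). The reflection $r_j$ in $\Pi_j$ belongs to $\Gamma$, so differentiating the invariance $W(r_ju)=W(u)$ from Hypothesis \ref{h2} yields $W_u(r_ju)=r_jW_u(u)$; on $\Pi_j$, where $r_ju=u$, this forces $W_u(u)$ to be fixed by $r_j$, hence $\langle W_u(u),n_j\rangle=0$. Therefore the $C^2$ function $u\mapsto\langle W_u(u),n_j\rangle$ vanishes on the hyperplane $\{\,\langle u,n_j\rangle=0\,\}$, and by Hadamard's lemma it factors as $\langle W_u(u),n_j\rangle=\langle u,n_j\rangle\,\psi_j(u)$ with $\psi_j$ continuous. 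Consequently $w_j$ solves the linear parabolic equation $\partial_tw_j-\Delta w_j+c_j\,w_j=0$ with $c_j:=\psi_j(u)$ (resp. $R^2\psi_j(u)$), a coefficient that is bounded on every bounded space--time cylinder since $u$ is continuous.

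Now I run a first-touching-time argument. Assume positivity is lost and let $t^*$ be the first time at which some $w_{j_0}$ reaches the value $0$ at a point $x^*\in\overline{F}_R$, all the $w_j$ remaining nonnegative on $\overline{F}_R\times[0,t^*]$; then $(x^*,t^*)$ is a minimum of $w_{j_0}$. If $x^*$ lies in the interior of $F_R$, the strong minimum principle for the linear equation above prevents a nonnegative solution from acquiring a new interior zero unless $w_{j_0}\equiv0$, which keeps positivity as an equality. If $x^*\in\partial B_R$ (finite case), the Neumann condition gives $\partial_{\n}w_{j_0}(x^*,t^*)=\langle\partial_{\n}u,n_{j_0}\rangle=0$, contradicting Hopf's lemma at a boundary minimum. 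The remaining, and essential, possibility is $x^*$ on a wall $P_i$ of $F$.

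The wall case is where the positivity of $f$ enters, and it is the main obstacle. Set $\gamma_i:=f(s_i)$; by \eqref{generalized-projection}, $\gamma_i$ is the orthogonal reflection through $E_i:=\ker(\gamma_i-I_m)$, and by Definition \ref{positive homomorphism} one has $E_i=\cap_{l\in S_i}\Pi_l$ for some index set $S_i\subset\{1,\dots,k\}$, hence $E_i^\perp=\mathrm{span}\{n_l:l\in S_i\}$. If $j_0\in S_i$, then since $x\in P_i$ is fixed by $s_i$ equivariance gives $u(x)=\gamma_iu(x)$, so $u(P_i)\subset E_i\subset\Pi_{j_0}$ and $w_{j_0}\equiv0$ on $P_i$ at all times, whence no genuine loss of positivity occurs there. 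If $j_0\notin S_i$, I reflect across $P_i$: for $x\in s_iF_R$ the equivariance $u(s_ix)=\gamma_i u(x)$ gives $w_{j_0}(x)=\langle u(s_ix),\gamma_i n_{j_0}\rangle$, and $\gamma_i n_{j_0}=n_{j_0}-2(n_{j_0})_{E_i^\perp}$. The decisive geometric input is that the inward normals of a fundamental domain of a reflection group satisfy $\langle n_p,n_q\rangle\le0$ for $p\neq q$ (see \cite{humphreys}, \cite{grove-benson}); writing $(n_{j_0})_{E_i^\perp}=\sum_{l\in S_i}\lambda_l n_l$ and testing against the $n_{l'}$, $l'\in S_i$, the coefficient vector solves $G\lambda=b$ with $G$ the positive-definite Gram matrix of $\{n_l\}_{l\in S_i}$ and $b_{l'}=\langle n_{j_0},n_{l'}\rangle\le0$; since a positive-definite matrix with nonpositive off-diagonal entries has an entrywise nonnegative inverse, $\lambda_l\le0$ for all $l$. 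Hence $\gamma_i n_{j_0}=n_{j_0}+\sum_{l\in S_i}2|\lambda_l|\,n_l$ is a nonnegative combination of inward normals, and $w_{j_0}(x)=w_{j_0}(s_ix)+\sum_{l\in S_i}2|\lambda_l|\,w_l(s_ix)\ge0$ on $s_iF_R$, because $s_ix\in F_R$ and all projections are nonnegative up to time $t^*$. Thus $w_{j_0}\ge0$ on a full two-sided neighborhood of $x^*$; since the flow is globally smooth, $w_{j_0}$ solves the linear parabolic equation across $P_i$ and attains there an interior minimum equal to $0$ at time $t^*$, which returns us to the interior case and yields a contradiction via the strong maximum principle. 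This proves the claim for $G$ finite; the discrete case is identical, the only boundary of $F$ being its walls (after reducing an unbounded $F=F_\nu\times\R^d$ to the bounded factor $F_\nu$ as in Step 1).
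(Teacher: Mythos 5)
Your reduction to the scalar projections $w_j=\langle u,n_j\rangle$, the derivation of the linear parabolic equation from the $\Gamma$-invariance of $W$ (this is exactly the paper's \eqref{coeff cj}), and the observation that $f(s_i)n_{j_0}$ is a nonnegative combination of inward normals (your M-matrix computation, which packages the positivity of $f$ together with the obtuseness of the chamber) are all correct and close in spirit to the paper's \eqref{expression rho} and \eqref{acute angles}. But the argument does not close, for two related reasons. First, the event you contradict --- ``$w_{j_0}$ reaches the value $0$ at an interior or wall point'' --- is neither the negation of the conclusion nor actually contradictory: $w_{j_0}$ vanishes identically on every wall $P_i$ with $j_0\in S_i$, at the origin (which $u$ maps into every $\Pi_j$ by equivariance), and possibly on open sets, since nothing prevents $u_0$ from mapping a neighborhood into $\Pi_{j_0}\cap\overline\Phi$; in that case the strong minimum principle's conclusion $w_{j_0}\equiv0$ is realized, not absurd, and you yourself concede this. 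What must be excluded is $w_{j_0}<0$, and a zero minimum of a nonnegative solution says nothing about later times. Second, if one repairs this by minimizing $\tilde w_{j_0}$ over $\overline{F}_R\times[0,t_0]$ and supposing the minimum $\mu$ is negative, the wall case breaks: your identity $w_{j_0}(x)=w_{j_0}(s_ix)+\sum_l2|\lambda_l|\,w_l(s_ix)$ bounds $w_{j_0}$ on the reflected side only by $\bigl(1+2\sum_l|\lambda_l|\bigr)\mu<\mu$, so the wall point need not be a minimum of the extended function and the interior minimum principle cannot be invoked there. The coupling to the other projections is conditional on exactly the positivity being propagated; the extension across $P_i$ is clean (even, with no coupling terms) only when $f(s_i)$ fixes the direction being projected.

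This is precisely the difficulty the paper's proof is engineered around. Instead of fixing the direction $n_{j_0}$ in advance, it measures the violation by $\mu=\max d(e^{-\lambda t}u,\overline\Phi)>0$, chooses the projection direction $\rho$ a posteriori from the extremal point (decomposed over the root system of the stabilizer of the touched face as in \eqref{expression rho}), proves the localization property \eqref{localization} --- the extremal point can lie only on walls $P_i$ with $f(s_i)\in\Gamma_\rho$, across which $\langle u,\rho\rangle$ extends without coupling terms --- and then obtains a genuine contradiction by propagating the strictly positive value $\mu$ over all of $\overline F_R$ and evaluating at the origin, where the projection vanishes. You would also need to handle points on lower-dimensional faces of $F$, where reflecting across a single wall does not produce a full neighborhood (the paper uses the subgroup $\tilde G$ for this). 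As written, your proof establishes the correct auxiliary identities but omits the mechanism that actually forces nonnegativity to persist.
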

\begin{proof}
We first present the proof in a specific case where the argument can be described with
 simpler notation  and then consider the abstract situation and give the proof for the general case. The case we discuss first is the example of Sec.2 where we have the discrete
 reflection group $G'$ acting on the domain, $\Gamma=D_3$ acting on the target, and the
 homomorphism $f':G' \to \Gamma$. We conserve the notation of Sec.2 and refer to Figure
 \ref{figure1} and to the comments following Theorem \ref{theorem2}. In particular, we still
 denote by $F'$ the fundamental domain of $G'$ and by $f'$ the homomorphism $G' \to
 \Gamma$ (which are denoted by $F$ and $f$ in the statement of Theorem \ref{theorem2} and in the first step of its proof).
We also denote by $\rho_1$ and $\rho_2$ the outward unit normals to the lines $\Pi_1$ and $\Pi_2$ that bound the fundamental domain $\Phi$ of $\Gamma$. Let
 $\Pi_3=\{u_2=\sqrt{3} u_1 \}$ be the third reflection line of $\Gamma$ and let
 $\rho_3:=(-\sqrt{3}/2,1/2) \bot \Pi_3$. From (\ref{evolution-problem2}) and the symmetry of $W$ given by (\ref{g-invariance}),
it follows
that for every $j=1,2,3$, the projection $h_j(x,t):= \langle u(x,t), \rho_j \rangle$ satisfies the linear scalar equation:
$$\Delta h_j +c_j^* h_j-\dfrac{\partial h_j}{\partial t} =0 \text{ in } \R^2 \times (0,+\infty),$$
with $c_j^*=R^2 c_j$ and $c_j$ (cf. \eqref{coeff cj} below) continuous and bounded on $\R^2 \times [0,T]$, for every $T>0$ .

Now, suppose that for some $t_0>0$, $u(.,t_0)$ does not belong to the class $A$.

In order to have equations with  nonpositive coefficients, we do the standard transformation and set $\tilde{h}_j(x,t):=e^{-\lambda t}h_j(x,t)$, where the constant
 $\lambda$ is chosen larger than $\sup \{c_j^*(x,t) \mid x \in \R^2, \  t\in [0,t_0],  \ j=1,2,3 \}$. Then, we have
\begin{equation}\label{linear equation j}
\Delta \tilde{h}_j + \tilde{c}_j^*  \tilde{h}_j-\dfrac{\partial
 \tilde{h}_j}{\partial t} =0 \text{ in } \R^2 \times (0,t_0], \text{  with }  \tilde{c}_j^*=c_j^*-  \lambda    \leq 0.
\end{equation}
Let $\mu := \max \{d (  e^
{ -\lambda  t} u(x,t), \overline \Phi) \mid x\in \overline{F'}, \  t\in [0,t_0] \}>0$, and suppose that this is achieved at $\tilde x \in \overline{F'}$ at time $\tilde t \in (0,t_0]$ (since $u_0 \in A$).
Define $$ \tilde u:=e^
{ -\lambda \tilde t}u(\tilde x, \tilde t),  \  \    \rho := \frac{ \tilde u- \tilde v}{ | \tilde  u - \tilde v|},$$ where $\tilde v$ is the unique point of $\partial \Phi$  (since $\Phi$ is convex) such that $d(\tilde u , \tilde v)=\mu$.
According to the direction of $\rho$, we distinguish the following cases:
 \begin{itemize}
\item[(i)] If $\rho=\rho_1$, then $\tilde v  \in \Pi_1 \cap \overline \Phi$ and we define
 $\omega := \{ (x,t) \in \R^2 \times (0,t_0] :    \langle e^{-\lambda t} u(x,t), \rho_1 \rangle
 > 0\}$. Clearly, $(\tilde x, \tilde t) \in \omega$ which is relatively open in $\R^2 \times (0,t_0]$.
\item[(ii)] If $\rho=\rho_2$, then $\tilde v  \in \Pi_2 \cap \overline \Phi$. Similarly, define $\omega := \{ (x,t) \in \R^2 \times (0,t_0] :    \langle e^{-\lambda t} u(x,t), \rho_2 \rangle > 0\}$, and we have $(\tilde x, \tilde t) \in \omega$ which is relatively open in $\R^2 \times (0,t_0]$.
\item[(iii)] If $\rho=\rho_3$, then $\tilde v=0$. We check that $(\tilde x, \tilde t) \in \omega := \{ (x,t) \in \R^2 \times (0,t_0] :    \langle e^{-\lambda t} u(x,t), \rho_3 \rangle > 0\},$ which is relatively open in $\R^2 \times (0,t_0]$.
\item[(iv)] If $\rho=\alpha_1 \rho_1 +\alpha_3 \rho_3$ with $\alpha_1, \alpha_3>0$, then
 $\tilde v=0$ and we define \\ $\omega := \{ (x,t) \in \R^2 \times (0,t_0] :    \langle
 e^{-\lambda t} u(x,t), \rho_j \rangle > 0 \text{ for $j=1$ and $j=3$} \}$. Thanks to the fact that $\langle \rho_1, \rho_3 \rangle \geq 0$, we have again $(\tilde x, \tilde t) \in \omega$ which is relatively open in $\R^2 \times (0,t_0]$.
\item[(v)] If, $\rho=\alpha_2 \rho_2 +\alpha_3 \rho_3$ with $\alpha_2, \alpha_3>0$, then $\tilde v=0$ and we define $\omega$ in a similar way.
\end{itemize}
We want to apply the maximum principle to $\tilde{h}(x,t):= \langle e^
{ -\lambda  t}  u(x,t), \rho \rangle$ in a neighborhood of $(\tilde x ,\tilde t)$. In the cases (i), (ii) and (iii) above, the equation \eqref{linear equation j} trivially holds in $\omega$. In the cases (iv) and (v), we have the inequality $$\Delta \tilde{h} + \tilde{c}^*  \tilde{h}-\dfrac{\partial
 \tilde{h}}{\partial t} \geq 0 \text{ in } \omega , \text{  with }  \tilde{c}^*=\max \{ \tilde{c}_j^* \mid j=1,2,3 \} \leq 0.$$
Indeed, we can check that for instance in the case (iv):
\begin{equation}
\Delta \tilde{h} + \tilde{c}^* \tilde{h}-\dfrac{\partial
 \tilde{h}}{\partial t}=\alpha_1   (\tilde{c}^*-\tilde{c}^*_1) \tilde{h}_1+\alpha_3   (\tilde{c}^*-\tilde{c}^*_3) \tilde{h}_3 \geq 0, \ \forall (x,t) \in \omega. \nonumber
\end{equation}

At this stage, the fact that $\Phi$ is an acute angle sector (i.e. $\langle \rho_1, \rho_2 \rangle \leq 0$) is essential to conclude the proof. This property implies that
\begin{equation}\label{acute angles bis}
\tilde u \in \Pi_j \text{ (with $j=1,2$)} \Rightarrow \tilde v, \ \rho \in \Pi_j,
\end{equation}
and as a consequence
\begin{equation}\label{localization bis}
f'(s_i) \notin \Gamma_{\rho} \Rightarrow \tilde x \notin P_i,
\end{equation}
where $P_i$ ($i=1,2,3$) is a line bounding $F'$, corresponding to the reflection $s_i \in G'$. To show \eqref{localization bis}, suppose that $\tilde x \in P_i$. Then, $s_i(\tilde x)=\tilde x$, and by $f'$-equivariance
$$u(\tilde x,\tilde t)=u(s_i(\tilde x),\tilde t)=f'(s_i) u(\tilde x,\tilde t) \Leftrightarrow u(\tilde x, \tilde t) \in \ker (f'(s_i) -I_2) \text {}.$$
Since $\ker (f'(s_i) -I_2)$ is either the line $\Pi_1$ or the line $\Pi_2$ (cf. Hypothesis \ref{h1}), we deduce thanks to \eqref{acute angles bis} that
$$\rho \in \ker (f'(s_i) -I_2) \Leftrightarrow f'(s_i) \in \Gamma_{\rho}.$$

Property \eqref{localization bis} will enable us to locate $\tilde x$ in $\overline{ F'}$. Let $\tilde G'$ be the subgroup of $G'$ generated by the reflections $s_i$ ($i=1,2,3$) such that $f'(s_i) \in \Gamma _{\rho}$. By $f'$-equivariance we have
$$\mu=\tilde h(\tilde x, \tilde t)= \max \{ \tilde h(x,t) : x\in {\cup_{g\in \tilde G'} g\overline{F'}}, \  t\in [0,t_0] \}.$$

But now $\tilde x \in  \mathrm{Int}\left( {\cup_{g\in \tilde G'} g\overline{F'}} \right)$, thus thanks to the maximum principle for parabolic equations applied in $\omega$, we can see that $\tilde h(x,\tilde t) \equiv \mu$, for $x \in B_{\delta}(\tilde x) \cap \overline{F'}$ (where $\delta>0$). To finish the proof, we are going to show that the set $S:=\{y \in \overline{F'} : \tilde h(y,\tilde t)=\mu \}$ is relatively open. Indeed, let $y \in S$ and let $w$ be the projection of $e^
{ -\lambda \tilde t}u(y, \tilde t)$ on $\overline \Phi$. We have $e^{ -\lambda \tilde t}u(y, \tilde t)-w=\mu \rho$, and repeating the above argument we find $\tilde h(x,\tilde t) \equiv \mu$, for $x \in B_{\delta'}(y) \cap \overline{F'}$ (where $\delta'>0$). Thus, by connectedness $\tilde h(.,\tilde t) \equiv \mu>0$ on $\overline{ F'}$ which is a contradiction since $\tilde h (0, \tilde t)=0$.

Let us now give the proof of the Lemma for arbitrary groups. We just present it when $G$ is
 a finite reflection group since it is similar for discrete reflection groups. We will need to apply the maximum principle to some projections of the solution $u$. We denote by $\rho_1,\ldots, \rho_k$ the outward unit
 normals to the hyperplanes $\Pi_1,\ldots, \Pi_k$ that bound the fundamental domain $\Phi$ (see Definition
 \ref{positive homomorphism}). We also consider the collection $\Pi_1,\ldots,\Pi_q$ ($k \leq q$) of all the reflection hyperplanes of $\Gamma$, and choose for $j>k$, a unit normal $\rho_j$ to $\Pi_j$.
Since the potential $W$ is symmetric, we know
that for every $j=1,\ldots,q$, the projection $h_j(x,t):= \langle u(x,t), \rho_j \rangle$ satisfies the linear scalar equation:
$$\Delta h_j +c_j h_j-\dfrac{\partial h_j}{\partial t} =0 \text{ in } B_R \times (0,+\infty),$$ with
\begin{equation}\label{coeff cj}
c_j =-\left\langle
\int_0^1W_{uu}\big(u + (s-1)\langle u, \rho_j \rangle
\rho_j \big) \rho_j \,\dd s,\, \rho_j \right\rangle.
\end{equation}
From this formula, one can see that $c_j$ is continuous and bounded on $B_R \times [0,T]$, for every $T>0$.

Now, suppose that for some $t_0>0$, $u(.,t_0)$ does not belong to the class $A^R$.

In order to have an equation with a nonpositive coefficient, we again do the standard transformation and set $\tilde{h}_j(x,t):=e^{-\lambda t}h_j(x,t)$, where the constant
 $\lambda$ is chosen bigger than $\sup \{c_j(x,t) : x \in B_R, \  t\in [0,t_0],  \ j=1,\ldots,q \}$. Then, we have $$\Delta \tilde{h}_j + \tilde{c}_j  \tilde{h}_j-\dfrac{\partial
 \tilde{h}_j}{\partial t} =0 \text{ in } B_R \times (0,t_0], \text{  with }  \tilde{c}_j=c_j-  \lambda    \leq 0.$$

Let $\mu := \max \{d (  e^
{ -\lambda  t} u(x,t), \overline \Phi) \mid x\in \overline{F_R}, \  t\in [0,t_0] \}>0$, and suppose that this is achieved at $\tilde x \in \overline{F_R}$ at time $\tilde t \in (0,t_0]$ (since $u_0 \in A_R$).
Define $$ \tilde u:=e^
{ -\lambda \tilde t}u(\tilde x, \tilde t),  \  \    \rho := \frac{ \tilde u- \tilde v}{ | \tilde  u - \tilde v|},$$ where $\tilde v$ is the unique point of $\partial \Phi$  (since $\Phi$ is convex)
 such that $d(\tilde u , \tilde v)=\mu$. We will apply the maximum principle to $\tilde{h}(x,t):= \langle e^
{ -\lambda  t}  u(x,t), \rho \rangle$ in a neighborhood of $(\tilde x ,\tilde t)$ in $\overline{B_R} \times (0,t_0]$. To do this, in analogy to what was done in the special case considered above we need to consider various possibilities for the unit vector $\rho$. If $\tilde v$ belongs to the interior of a $m-p$ dimensional face $\Pi_1 \cap \ldots \cap \Pi_p \cap \overline \Phi$ ($1 \leq p \leq k$) of $\Phi$,
 then, using also that $\rho_1,\ldots, \rho_k$ are
 linearly independent, we have 
$$\rho \bot E, \ E:= \Pi_1 \cap \ldots \cap \Pi_p, \text{ that is, } \rho \in E^{\bot}=\R \rho_1 \oplus \ldots \oplus \R \rho_p$$
 where $\R \rho_j=\{x: x=t\rho_j, t\in\R\}$ and $E^{\bot}$ is the orthogonal complement of $E$. Let $\tilde \Gamma$ be the subgroup of $\Gamma$ generated by the reflections with respect to the hyperplanes $\Pi_1, \ldots, \Pi_p$. The elements of
 $\tilde \Gamma$ leave invariant the subspace $E$, and actually $\tilde \Gamma $ acts in
 $E^{\bot}$. Let $ N \supset \{\pm \rho_1,\ldots,\pm \rho_p \}$, be the set of all the unit normals to the reflection hyperplanes of $\tilde\Gamma$.  We claim that
\begin{equation}\label{expression rho}
\left\{\begin{array}{l}
\rho=\alpha_1 \nu_1+ \ldots+\alpha_{\tilde p} \nu_{\tilde p},\;\text{ with }\;1 \leq \tilde p \leq p, \ \alpha_1, \ldots, \alpha_{\tilde p}>0,\; \nu_1,\ldots,\nu_{\tilde p}\in N,\\\\
\;\text{ and }\;\langle \nu_i,\nu_j\rangle \geq 0,\;\text{ for }\;1 \leq i,j\leq \tilde p.
\end{array}\right.
\end{equation}
Given $p$ linearly independent vectors $\nu_1,\ldots,\nu_p\in N$ we denote by $C(\nu_1,\ldots,\nu_p)$ the cone
\begin{equation}\label{cone-def}
C(\nu_1,\ldots,\nu_p):=\{ \alpha_1 \nu_1+  \ldots +\alpha_p \nu_p \mid \alpha_1 ,\ldots, \alpha_p \geq 0 \}.
\end{equation}
 To prove the claim we start by observing that (since $\rho_1,\dots,\rho_p$ are linearly independent) we have $\rho\in\mathcal{C}$ with
\[ \mathcal{C}=C(\rho_1^\prime,\dots,\rho_p^\prime),\;\text{ with }\;\rho_j^\prime=\rho_j\,\text{ or }-\rho_j,\;j=1,\ldots,p.\]
To conclude the proof we show that $\mathcal{C}$ can be partitioned in cones of the form (\ref{cone-def}) that satisfy the condition
\begin{equation}\label{theonly-nu}
C(\nu_1,\ldots,\nu_p)\cap N=\{\nu_1,\ldots,\nu_p\}.
\end{equation}
Note that (\ref{theonly-nu}) and the fact that $\tilde{\Gamma}$ is a reflection group automatically imply
\[\langle\nu_i,\nu_j\rangle\geq 0,\;\text{ for }\;1\leq i,j\leq  p.\]
Indeed $N$ is invariant under $\tilde{\Gamma}$ ($N$ is the root system of $\tilde{\Gamma}$) and therefore $\langle\nu_i,\nu_j\rangle<0$ implies that the reflection $\nu\not\in\{\nu_i,\nu_j\}$ of $\nu_j$ in the hyperplane orthogonal to $\nu_i$ belongs to both $N$ and $C(\nu_1,\ldots,\nu_p)$ in contradiction with (\ref{theonly-nu}).
If $\mathcal{C}$ does not satisfy (\ref{theonly-nu}) there exists $\nu\in N$ that (possibly after a renumbering of the vectors $\rho_1^\prime,\dots,\rho_p^\prime$) has the form $\nu=\alpha_1 \rho_1^\prime+ \ldots+\alpha_{\hat p} \rho_{\hat p}^\prime$
with $2 \leq \hat p \leq p$, $\alpha_1, \ldots, \alpha_{\hat p}>0$ and we can partition $\mathcal{C}$ into the $\hat{p}$ cones $\mathcal{C}_i=
C(\rho_1^\prime,\ldots,\rho_{i-1}^\prime,\nu,\rho_{i+1}^\prime,\ldots,\rho_p^\prime)$, $i=1,\ldots,\hat{p}$ defined by the linearly independent vectors $\rho_1^\prime,\ldots,\rho_{i-1}^\prime,\nu,\rho_{i+1}^\prime,\ldots,\rho_p^\prime$, $i=1,\ldots,\hat{p}$. If $\mathcal{C}_i$ does not satisfy (\ref{theonly-nu}) we partition $\mathcal{C}_i$ in the same fashion used for $\mathcal{C}$ and continue in this way. Note that at each step (if some of the cones of the partition does not satisfies (\ref{theonly-nu})) the number of vectors in $N$ used to generate the cones of the partition increases by one. Therefore, since $N$ is a finite set, the process terminates after a finite number of steps exactly when all the cones of the partition satisfy (\ref{theonly-nu}). This conclude the proof of the claim.

Since $\tilde v \in E$, it follows that (with $\nu_1\ldots,\nu_{\tilde{p}}$ the vectors in \eqref{expression rho})
$$(\tilde x, \tilde t) \in \omega := \{ (x,t) \in \overline{B_R} \times (0,t_0] \mid   \  \langle e^{-\lambda t} u(x,t), \nu_j \rangle > 0, \  \forall j=1,\ldots,\tilde  p \},$$ which is relatively open in $\overline{B_R} \times (0,t_0]$, and in addition we have $$\Delta \tilde{h} + \tilde{c}  \tilde{h}-\dfrac{\partial
 \tilde{h}}{\partial t} \geq 0 \text{ in } \omega , \text{  with }  \tilde{c}=\max \{ \tilde{c_j} \mid j=1,\ldots,q \} \leq 0.$$
 At this stage, the fact that $\Phi$ has acute angles (i. e. $\langle \rho_i, \rho_j \rangle \leq 0$ for $1 \leq i< j \leq k$) is essential to conclude the proof. This property implies that
\begin{equation}\label{acute angles}
\tilde u \in \Pi_j \text{ (with $1 \leq j \leq k$)} \Rightarrow \tilde v, \ \rho \in \Pi_j,
\end{equation}
and as a consequence
\begin{equation}\label{localization}
f(s_i) \notin \Gamma_{\rho} \Rightarrow \tilde x \notin P_i,
\end{equation}
where $P_i$ is, as in Definition \ref{positive homomorphism}, a hyperplane bounding $F$ corresponding to the reflection $s_i \in G$. To show \eqref{localization}, suppose that $\tilde x \in P_i$. Then  $$u(\tilde x, \tilde t) \in \ker (f(s_i) -I_m) \text { by $f$-equivariance},$$
and thanks to Hypothesis \ref{h1} and \eqref{acute angles},
$$\rho \in \ker (f(s_i) -I_m) \Rightarrow f(s_i) \in \Gamma_{\rho}.$$

Property \eqref{localization} will enable us to locate $\tilde x$ in $\overline F_R$. Let $\tilde G$ be the subgroup of $G$ generated by the reflections $s_i$ ($i=1,\ldots,l$) such that $f(s_i) \in \Gamma _{\rho}$. By $f$-equivariance we have
$$\mu=\tilde h(\tilde x, \tilde t)= \max \{ \tilde h(x,t) \mid x\in {\cup_{g\in \tilde G} g\overline{F_R}}, \  t\in [0,t_0] \}.$$
But now, either $\tilde x \in  \mathrm{Int}\left( {\cup_{g\in \tilde G} g\overline{F_R}} \right)$ or  $\tilde x \in  \mathrm{Int}\left( {\cup_{g\in \tilde G} g\overline{F}} \right) \cap \partial B_R$.
In both cases, we can see, thanks to the maximum principle for parabolic equations applied in $\omega$ and thanks to Hopf's Lemma,
that $\tilde h(x,\tilde t) \equiv \mu$, for $x \in B_{\delta}(\tilde x) \cap \overline{F_R}$ (where $\delta>0$). To finish the proof, we are
going to show that the set $S:=\{y \in \overline{F_R} \mid \tilde h(y,\tilde t)=\mu \}$ is relatively open. Indeed, let $y \in S$ and let $w$
be the projection of $e^{ -\lambda \tilde t}u(y, \tilde t)$ on $\overline \Phi$. We have $e^{ -\lambda \tilde t}u(y, \tilde t)-w=\mu \rho$
and according to the preceding argument $\tilde h(x,\tilde t) \equiv \mu$ for $x \in B_{\delta'} (y) \cap \overline{F_R}$ (where $\delta'>0$). Thus, by connectedness $\tilde h(.,\tilde t) \equiv \mu>0$ on $\overline F_R$ which is a contradiction since $\tilde h (0, \tilde t)=0$.
\end{proof}

If the initial condition in \eqref{evolution-problem1} (respectively \eqref{evolution-problem2}) is a $W^{1,2}(B_R, \R^m)$ (respectively,
 $W^{1,2}_{\mathrm{loc}}(\R^n,\R^m)$), bounded map, then the solution to \eqref{evolution-problem1} (respectively,
\eqref{evolution-problem2}) belongs to $C([0,\infty), W^{1,2}(B_R,\R^m))$ (respectively $C([0,\infty), W^{1,2}_{\mathrm{loc}}(\R^n,\R^m))$),
and is smooth for $t>0$. We are now going to take the minimizer $u_R$ constructed in Step 1, as the initial condition in
\eqref{evolution-problem1} (respectively \eqref{evolution-problem2}). Thanks to Lemma \ref{approximation} below (and to its analog for
discrete reflection groups), we can construct a sequence of smooth, $f$-equivariant, and positive maps
$(u_k)$ which converges to $u_R$ for the $W^{1,2}$ norm, as $k \to \infty$.
Applying then Lemmas \ref{equivariance flow} and \ref{positivity flow} to $u_k$, and utilizing the continuous dependence for the flow of the initial
condition, we obtain that the solution to \eqref{evolution-problem1} (respectively \eqref{evolution-problem2}), with initial condition $u_R$, is $f$-equivariant and positive, that is, $u(\cdot, t; u_R) \in A^R, \text{ for }
t \geq 0$ (respectively $u(\cdot, t; u_R) \in A, \text{ for }
t \geq 0$).

\begin{lemma}\label{approximation}
Let $u \in W^{1,2}(B_R,\R^m) \cap L^{\infty}(B_R,\R^m)$ be a $f$-equivariant map such that $u ( \overline{F_R} ) \subset \overline{\Phi}$. Then, there exists a sequence $(u_k) \subset
C(\overline{B_R},\R^m)$ of globally Lipschitz maps with the following
 properties:
\begin{itemize}
\item[(i)] $ u_k$ is $f$-equivariant,
\item[(ii)] $\| u_k \|_{L^{\infty} (B_R,\R^m)} \leq \| u \|_{L^{\infty} (B_R,\R^m)}$,
\item[(iii)] $u_k( \overline{F_R} ) \subset \overline{\Phi}$ (positivity),
\item[(iv)] $u_k$ converges to $u$ in $W^{1,2}(B_R,\R^m)$,
  as $k \to \infty$.
\end{itemize}
\end{lemma}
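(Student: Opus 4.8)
The plan is to build the $u_k$ by first mollifying $u$ (to gain smoothness) and then projecting pointwise back into the fundamental cone $\overline\Phi$ (to restore positivity), the two nontrivial points being that neither operation should destroy $f$-equivariance. First I would extend $u$ to a bounded $f$-equivariant map on a larger ball. A bounded Sobolev extension followed by the radial truncation $T_M(v)=v$ for $|v|\le M$ and $T_M(v)=Mv/|v|$ otherwise (with $M:=\|u\|_{L^\infty(B_R)}$) gives $\bar u\in W^{1,2}(B_{2R},\R^m)$ with $\|\bar u\|_{L^\infty}\le M$. I then symmetrize, setting
$$u^\sharp(x):=\frac{1}{|G|}\sum_{g\in G}f(g)^\top\,\bar u(gx),\qquad x\in B_{2R},$$
which is legitimate since every $g\in G$ is orthogonal, so $gB_{2R}=B_{2R}$. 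Reindexing $h=gg'$ and using $f(g)^\top=f(g)^{-1}$ shows $u^\sharp(g'x)=f(g')u^\sharp(x)$, so $u^\sharp$ is $f$-equivariant; because $u$ is already $f$-equivariant each summand equals $u$ on $B_R$, whence $u^\sharp=u$ there, $u^\sharp\in W^{1,2}$, and $\|u^\sharp\|_{L^\infty}\le M$.

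Next I mollify with a radial kernel $\eta_\epsilon$, setting $u_\epsilon:=u^\sharp*\eta_\epsilon$ on $B_R$ (well defined for small $\epsilon$ since $u^\sharp$ lives on $B_{2R}$). Because $\eta_\epsilon$ is radial and each $g$ is orthogonal, the change of variables $z=g^{-1}y$ gives $u_\epsilon(gx)=f(g)u_\epsilon(x)$; thus $u_\epsilon$ is smooth, globally Lipschitz on $\overline{B_R}$, $f$-equivariant, satisfies $\|u_\epsilon\|_{L^\infty}\le M$, and $u_\epsilon\to u$ in $W^{1,2}(B_R)$. Equivariance forces the correct wall behaviour: if $x\in P_i$ then $s_ix=x$, whence $u_\epsilon(x)=f(s_i)u_\epsilon(x)$, i.e. $u_\epsilon(x)\in\ker(f(s_i)-I_m)=:E_i$.

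Finally I define $u_k:=\proj_{\overline\Phi}\circ u_\epsilon$ on $\overline{F_R}$ and extend by $f$-equivariance, $u_k(gy):=f(g)u_k(y)$ for $y\in\overline{F_R}$, where $\proj_{\overline\Phi}$ is the nearest-point projection onto the closed convex cone $\overline\Phi$. This map is $1$-Lipschitz, fixes $\overline\Phi$, and satisfies $|\proj_{\overline\Phi}(v)|\le|v|$ since $0\in\overline\Phi$. Granting the well-definedness of the extension (see below), this gives at once (i) equivariance, (iii) positivity, and (ii) the bound $|u_k|=|\proj_{\overline\Phi}(u_\epsilon)|\le|u_\epsilon|\le M$, as well as global Lipschitz regularity (a continuous map that is Lipschitz on each of the finitely many congruent pieces $g\overline{F_R}$ is Lipschitz on $\overline{B_R}$). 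For (iv) I use $u_k=\proj_{\overline\Phi}\circ u_\epsilon\to\proj_{\overline\Phi}\circ u=u$ in $W^{1,2}(F_R)$ (from $u(\overline{F_R})\subset\overline\Phi$ and the chain rule for Lipschitz maps) and then transport the convergence to each $g\overline{F_R}$ by the isometries $f(g)$, summing over the finite group $G$. Taking $\epsilon=1/k\to0$ produces the desired sequence.

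The main obstacle is that the equivariant extension of $u_k$ is continuous across walls only if $\proj_{\overline\Phi}$ maps each face subspace $E_i$ into itself: since $u_\epsilon(x)\in E_i$ on $P_i$, I need $\proj_{\overline\Phi}(E_i)\subset E_i$ so that $u_k(x)\in\ker(f(s_i)-I_m)$ and $f(s_i)u_k(x)=u_k(x)$. I would establish this from the acute-angle property of the Coxeter chamber. Write $\overline\Phi=\{v:\langle v,\rho_j\rangle\le0,\ j=1,\dots,k\}$ with $\langle\rho_i,\rho_j\rangle\le0$ for $i\neq j$, and recall from Definition \ref{positive homomorphism} that $E_i=\cap_{l\in S_i}\Pi_l=\{v:\langle v,\rho_l\rangle=0,\ l\in S_i\}$. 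The Karush--Kuhn--Tucker conditions for $p:=\proj_{\overline\Phi}(w)$ read $w-p=\sum_j\lambda_j\rho_j$ with $\lambda_j\ge0$, $\langle p,\rho_j\rangle\le0$, and complementarity $\lambda_j\langle p,\rho_j\rangle=0$. For $w\in E_i$ and $l\in S_i$, testing against $\rho_l$ gives $0=\langle w,\rho_l\rangle=\langle p,\rho_l\rangle+\lambda_l|\rho_l|^2+\sum_{j\neq l}\lambda_j\langle\rho_j,\rho_l\rangle$, where the last sum is $\le0$; then $\lambda_l>0$ forces $\langle p,\rho_l\rangle=0$ by complementarity, while $\lambda_l=0$ forces $\langle p,\rho_l\rangle\ge0$ hence $\langle p,\rho_l\rangle=0$ as well. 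Thus $p\in\cap_{l\in S_i}\Pi_l=E_i$, as needed. I expect the remaining delicate point to be the $W^{1,2}$ convergence through the merely Lipschitz map $\proj_{\overline\Phi}$, which I would handle by passing to a subsequence with $\nabla u_\epsilon\to\nabla u$ a.e. and invoking dominated convergence with the bounded differential of $\proj_{\overline\Phi}$.
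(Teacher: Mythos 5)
The paper does not actually prove Lemma \ref{approximation}: it defers entirely to Proposition 5.2 of \cite{alikakos-smyrnelis}. Your proposal therefore supplies what the paper omits, and it is essentially the right construction --- indeed it follows the same mollify-then-project philosophy as the cited result, adapted to the $f$-equivariant setting. The two genuinely nontrivial points are exactly the ones you isolate: (a) averaging over $G$ and convolving with a radial kernel preserve $f$-equivariance, and (b) the nearest-point projection onto the simplicial cone $\overline{\Phi}$ maps each fixed-point subspace $E_i=\ker(f(s_i)-I_m)=\cap_{l\in S_i}\Pi_l$ into itself, which is what makes the equivariant extension off $\overline{F_R}$ single-valued and continuous across the walls. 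Your KKT argument for (b), resting on $\langle\rho_i,\rho_j\rangle\le 0$, is correct and is precisely the same acute-angle mechanism the paper exploits in the proof of Lemma \ref{positivity flow}.

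Two points deserve to be made explicit rather than left implicit. First, the well-definedness of $u_k(gy):=f(g)u_k(y)$ requires not just $u_k(y)\in E_i$ for $y\in P_i$, but the standard fact that the stabilizer in $G$ of a point of $\overline{F}$ is generated by the reflections in the walls containing that point; without it, checking $f(s_i)u_k(y)=u_k(y)$ for the bounding reflections does not immediately give $f(g)u_k(y)=u_k(y)$ for every $g$ fixing $y$. Second, the strong $W^{1,2}$ convergence of $\proj_{\overline{\Phi}}\circ u_\epsilon$ is the only step where your sketch is thinner than it should be: $D\proj_{\overline{\Phi}}$ is discontinuous on the boundaries of its linearity regions, and $u$ may take values exactly on $\partial\Phi$ on a set of positive measure, so ``a.e. convergence plus dominated convergence'' does not apply verbatim to $D\proj_{\overline{\Phi}}(u_\epsilon)\nabla u_\epsilon$. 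The clean fix is to write
\begin{equation*}
\nabla(\proj_{\overline{\Phi}}\circ u_\epsilon)-\nabla u
= D\proj_{\overline{\Phi}}(u_\epsilon)\,(\nabla u_\epsilon-\nabla u)
+\bigl(D\proj_{\overline{\Phi}}(u_\epsilon)-I_m\bigr)\nabla u,
\end{equation*}
kill the first term by $\|D\proj_{\overline{\Phi}}\|\le 1$, and handle the second by splitting $F_R$ according to whether $u(x)\in\Phi$ (where $D\proj_{\overline{\Phi}}(u_\epsilon(x))=I_m$ eventually along an a.e.-convergent subsequence) or $u(x)\in\Pi_j$ (where $\langle\nabla u,\rho_j\rangle=0$ a.e., so the discrepancy in the $\rho_j$-direction is harmless). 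With these two repairs the argument is complete and self-contained, which is arguably an improvement over the paper's bare citation.
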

\begin{proof}
See Proposition 5.2 in \cite{alikakos-smyrnelis}.
\end{proof}
Thanks to the fact that $u_R$ is a
global minimizer of $J_{B_R}$ (respectively $J_{F}^R$) in $A^R$ (respectively $A$), and since
$u(\cdot,t;u_R)$
is a classical solution to \eqref{evolution-problem1} (respectively \eqref{evolution-problem2}) for $t>0$, we conclude from the computation
\begin{equation}\label{decreasing J}
\frac{\mathrm{d}}{\mathrm{d} t} J_{B_R} (u(\cdot,t))= - \int_{B_R} |u_t|^2 \,\mathrm{d} x \ \left( \text{respectively } \frac{\mathrm{d}}{\mathrm{d} t} J^R_{F} (u(\cdot,t))
= - \int_{F} |u_t|^2 \,\mathrm{d} x \right)
\end{equation}
that $|u_t (x,t)|=0$, for all $x \in B_R$ (respectively $x \in F$) and $t>0$.
Hence, for $t>0$, $u(\cdot, t)$ satisfies
\begin{equation}\label{u-satisfies}
\Delta u(x,t) - W_u (u(x,t)) = 0 \ \text{ (respectively } \Delta u(x,t) - R^2 W_u (u(x,t)) = 0).
\end{equation}
By taking $t \to 0+$ and utilizing the continuity of the flow in
$W^{1,2} (B_R,\R^m)$ (respectively $W^{1,2}_{\mathrm{loc}}(\R^n,\R^m)$) at $t=0$,
we obtain that $u_R$ is a $f$-equivariant, classical
solution to system \eqref{elliptic system} (respectively \eqref{elliptic system R}) satisfying also
$u_R( \overline{ F_R}) \subset \overline{\Phi}$ (respectively $u_R( \overline{ F}) \subset \overline{\Phi}$).

If $G$ is a finite reflection group, from the family of solutions $u_R\in C^3(B_R,\R^m),\;R\geq 1$  we can deduce the existence of an entire $f$-equivariant classical solution $u\in C^3(\R^n,\R^m)$ to system \eqref{elliptic system} defined by
\begin{eqnarray}\label{entire}
u(x)=\lim_{j\rightarrow+\infty}u_{R_j}(x),
\end{eqnarray}
where $R_j\rightarrow+\infty$ is a suitable subsequence and the convergence is in the $C^2$ sense in compact subsets of $\R^n$. This follows from the fact that $u_R$ satisfies the bound
\begin{eqnarray}\label{below-emmeprime}
\|u_R\|_{C^{2+\alpha}(\overline{B_R},\R^m)}\leq M^\prime,
\end{eqnarray}
for some $\alpha\in(0,1)$ and $M^\prime>0$ independent of $R\geq 1$. The estimate (\ref{below-emmeprime}) follows by elliptic regularity  from \eqref{below-emme}, from the assumed smoothness of $W$ and from the fact that $\partial B_R$ is uniformly smooth for $R\geq 1$. The solution $u$ satisfies also: $u(\overline{F})\subset\overline{\Phi}$.
\begin{step}[Pointwise estimates] \end{step}
From Step.2 to complete the proof of Theorem \ref{theorem1} and Theorem \ref{theorem2} it remains to prove that the entire solution $u$ to system \eqref{elliptic system}  and the
 solution $u_R$ to system  \eqref{elliptic system R} satisfy the pointwise estimates stated in
 Theorem \ref{theorem1} and Theorem \ref{theorem2} respectively. For establishing these estimates we utilize the following theorem that we quote from \cite{f}. We need the concept of {\it local minimizer}
\begin{definition}\label{definition}
Let $\Omega\subset\R^n$ an open set.
A map $u\in C^2(\Omega;\R^m)\cap L^\infty(\Omega,\R^m)$ is a local minimizer if
\begin{eqnarray}
J_{\Omega^\prime}(u)\leq J_{\Omega^\prime}(u+v),
\end{eqnarray}
for all $v\in W_0^{1,2}(\Omega^\prime,\R^m)\cap L^\infty(\Omega^\prime,\R^m)$
and for all smooth bounded open subset $\Omega^\prime\subset\Omega$.
\end{definition}
\begin{theorem}\label{teo-2}
Assume $W:\R^m\to\R$ is a nonnegative $C^3$  potential and that $W(a)=0$ for some $a\in\R^m$ satisfying condition (ii) in Hypothesis \ref{h2}. Let $Z\subset\R^m$ be the set $Z=\{z\neq a: W(z)=0\}$ and let $Z_\delta=\cup_{z\in Z}B_{z,\delta}$ a $\delta$-neighborhood of $Z$.

 Assume that $u:\Omega\to\R^m$ is a local minimizer and that there is $M>0$ such that
 \begin{eqnarray}\label{new-emmeprime}
 \vert u\vert+\vert\nabla u\vert\leq M,\;\;x\in\Omega
 \end{eqnarray}

 Then there is a $\bar{q}\in(0,q^*]$ and a strictly decreasing continuous function $r:(0,\bar{q}]\to (0,\infty)$, $\lim_{q\rightarrow 0^+}r(q)=+\infty$, such that the condition
\begin{eqnarray}\label{condition}
B_{r(q)}(x_0)\subset\Omega\;\,\text{ and }\;\,u(B_{r(q)}(x_0))\cap Z_\delta=\varnothing,
\end{eqnarray}
implies
\begin{eqnarray*}
\vert u(x_0)-a\vert< q.
\end{eqnarray*}
The map $r$ depends only on $W$ and $M$  if $Z=\varnothing$ and also on $\delta$ otherwise.

Moreover, if $a$ is nondegenerate in the sense that  $D^2W(a)$ is positive definite, then there exists a constant $k_0>0$ such that
\begin{eqnarray}\label{log}
r(q)\leq r(\bar{q})+\frac{1}{k_0}\vert\log{\frac{q}{\bar{q}}}\vert,\;\;q\in(0,\bar{q}].
\end{eqnarray}
\end{theorem}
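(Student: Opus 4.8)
The plan is to separate three ingredients: a coercivity analysis of $W$ near $a$, a confinement estimate coming from minimality, and a comparison (barrier) argument producing the decay function $r$. Throughout I use that a local minimizer in the sense of Definition~\ref{definition} is a classical solution of $\Delta u = W_u(u)$, so that $\tfrac12\Delta|u-a|^2 = |\nabla u|^2 + \langle u-a, W_u(u)\rangle$ is available.

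First I would extract the quantitative behaviour of $W$ near $a$. Writing $\psi_\nu(s):=W(a+s\nu)$ for $\nu\in\SF^{m-1}$, the strict positivity of the second derivative of $s\mapsto W(a+s\nu)$ on $(0,q^*]$ (the second-derivative condition assumed on $a$) makes $\psi_\nu$ strictly convex, and since $a$ is a minimum $\psi_\nu(0)=\psi_\nu'(0)=0$. Convexity then yields the two facts I need: a continuous strictly increasing $g:[0,q^*]\to[0,\infty)$ with $g(0)=0$, $g>0$ on $(0,q^*]$ and $W(a+s\nu)\ge g(s)$; and the monotonicity relation $\langle y-a, W_u(y)\rangle = s\,\psi_\nu'(s)\ge\psi_\nu(s)=W(y)\ge g(|y-a|)$ for $0<|y-a|=s\le q^*$. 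When $D^2W(a)$ is positive definite one may take $g(s)=c_1 s^2$. Separately, since the only zeros of $W$ in $\overline{B_M(0)}$ are $a$ and the points of $Z$, compactness gives $m_0>0$ with $W(y)\ge m_0$ whenever $|y|\le M$, $|y-a|\ge q^*$ and $d(y,Z)\ge\delta$. Combining the identity above with the monotonicity relation produces the key differential inequality $\tfrac12\Delta|u-a|^2\ge g(|u-a|)$ on the open set where $|u-a|<q^*$; this is the engine of the decay.

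Next I would use local minimality both to bound the energy and to confine $u$ to the basin of $a$. Comparing $u$ on a ball $B_R(x_0)\subset\Omega$ with the competitor equal to $a$ on $B_{R-1}(x_0)$ and interpolating radially to $u$ across the unit annulus, Definition~\ref{definition} together with the bound \eqref{new-emmeprime} gives $\int_{B_R(x_0)}W(u)\le J_{B_R}(u)\le CR^{n-1}$. Combined with $W\ge m_0$ on the intermediate region and the avoidance hypothesis \eqref{condition}, this forces the far set $\{x\in B_R(x_0):|u(x)-a|\ge q^*\}$ to have measure $O(R^{n-1})$, and a localized version of the same comparison shows it contains no ball of radius exceeding a fixed $\rho_1$. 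The crucial and most delicate point is to upgrade this to genuine pointwise confinement: that for $R$ larger than a threshold $r_0(W,M,\delta)$ one has $|u-a|\le q^*$ on $B_{R-r_0}(x_0)$. I expect this to be the \emph{main obstacle}, because an energy bound of order $R^{n-1}$ is exactly borderline, so it does not by itself rule out a thin ``bad tube'' through $x_0$; overcoming it requires a density estimate of Caffarelli--C\'ordoba type, established by an energy-comparison iteration that exploits minimality at all scales to show that the bad set cannot reach $x_0$ once the clean ball is large.

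Finally, with confinement in hand the decay is a maximum-principle argument. On $B_{R-r_0}(x_0)$ the function $\phi:=|u-a|^2$ satisfies $\tfrac12\Delta\phi\ge g(\sqrt\phi)$ with $\phi\le(q^*)^2$ on the boundary, so comparison with the radial solution $\Psi$ of $\tfrac12(\Psi''+\tfrac{n-1}{r}\Psi')=g(\sqrt\Psi)$, $\Psi(R-r_0)=(q^*)^2$ (legitimate since $s\mapsto g(\sqrt s)$ is nondecreasing), gives $|u(x_0)-a|\le\Psi(0)^{1/2}$, a quantity that decreases to $0$ as $R\to\infty$; inverting this relation defines the strictly decreasing, continuous $r$ on some $(0,\bar q]$ with $\lim_{q\to0^+}r(q)=+\infty$. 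In the nondegenerate case $g(s)=c_1 s^2$ linearizes the inequality to $\Delta\phi\ge 2c_1\phi$, and comparison with the modified-Bessel radial barrier gives $|u(x_0)-a|\le K\mathrm e^{-k_0(R-r_0)}$; solving $K\mathrm e^{-k_0(R-r_0)}=q$ for $R$ yields exactly the logarithmic bound \eqref{log}. The case $Z=\varnothing$ is identical with the avoidance hypothesis vacuous, which is why $r$ then depends only on $W$ and $M$.
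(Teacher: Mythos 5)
First, a point of order: the paper does not actually prove Theorem \ref{teo-2} — it is quoted from \cite{f} and \cite{fusco}, where the argument is purely variational: one builds energy-decreasing competitors by radially retracting $u$ toward $a$ in the target (a ``variational maximum principle''), using the radial convexity in Hypothesis \ref{h3}(ii) to show the retraction lowers $W$ and the $1$-Lipschitz character of the projection onto $\overline{B_q(a)}$ to control the gradient term. Your route — subharmonicity of $|u-a|^2$ plus radial ODE barriers — is genuinely different, and its first, third and final ingredients (the convexity inequality $\langle u-a,W_u(u)\rangle\ge W(u)\ge g(|u-a|)$ for $|u-a|\le q^*$, the $O(R^{n-1})$ energy bound, the comparison with a radial supersolution) are sound.

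The genuine gap is exactly where you flag it, and it is not a deferrable technicality: the pointwise confinement $|u-a|\le q^*$ on an inner ball is the heart of the theorem, and you do not prove it. Without it the barrier argument cannot start, because the differential inequality $\tfrac12\Delta|u-a|^2\ge g(|u-a|)$ is only valid on the set $\{|u-a|\le q^*\}$: Hypothesis \ref{h3}(ii) is a \emph{radial} convexity assumption restricted to $(0,q^*]$, so for $|u-a|>q^*$ the quantity $\langle u-a,W_u(u)\rangle=s\,\psi_\nu'(s)$ has no sign and $|u-a|^2$ need not be subharmonic. The confinement that \emph{does} follow from your steps — the bad set $\{|u-a|\ge q^*\}$ contains no ball of radius $\rho_1$, hence by the gradient bound \eqref{new-emmeprime} one gets $|u-a|\le q^*+M\rho_1$ on an inner ball — overshoots the threshold by a fixed amount $M\rho_1$ that does not shrink when you rerun the argument with a smaller level, so it cannot be bootstrapped. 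Closing this gap by ``a density estimate of Caffarelli--C\'ordoba type'' is circular in spirit: such estimates in the vector-valued, multi-well setting are themselves established by precisely the kind of comparison-map construction that constitutes the proof in \cite{f} and \cite{alikakos-fusco}, so you are invoking a result of comparable depth to the one being proved. To make your outline into a proof you would need to actually carry out that step, e.g.\ by proving the retraction lemma (if $|u-a|\le q\le q^*$ on $\partial A$ for $A\subset\subset\Omega$, then $|u-a|\le q$ on $A$) together with an argument producing a large region whose boundary lies in $\{|u-a|\le q^*\}$ — at which point you have essentially reproduced the cited proof and the barrier step becomes redundant.
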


\begin{remark}\label{remark}
The map $r$ has a strictly decreasing  inverse $q:[r(\bar{q}),+\infty)\rightarrow (0,\bar{q}]$.
If $a$ is nondegenerate (\ref{log}) implies
\begin{eqnarray}\label{inverse}
q(r)<K_0e^{-k_0r},\;r\in[r(\bar{q}),+\infty),
\end{eqnarray}
for some $K_0>0$.
\end{remark}
For the proof of Theorem \ref{teo-2} we refer to \cite{f}, \cite{fusco}.  In \cite{f} the proof is for the case of a generic potential and covers all cases where $D_0=F$. To treat the general case  $F\subset D_0$ one needs to show that Theorem \ref{teo-2} holds true in the case of
 symmetric potentials and $f$-equivariant local minimizers. In \cite{fusco} the validity of
 Theorem \ref{teo-2} is established for symmetric potential and $f=I$. The arguments in \cite{fusco} extend naturally to cover the general case of $f$-equivariance. 

If $G$ is a finite reflection group we apply Theorem \ref{teo-2} to $u_R$ with $\Omega=D_{0,R}:=D_0\cap B_R$ and $Z=\{\gamma a\}_{\gamma\in\Gamma}\setminus\{a\}$. From Step.2 we have $u_R(\overline{F}_R)\subset\overline{\Phi}$ and therefore by $f$-equivariance
\begin{eqnarray}\label{d-inphi}
u_R(\overline{D}_{0,R})\subset u_R(\overline{D}_R)\subset\cup_{\gamma\in\Gamma_a}\gamma\overline{\Phi}=\overline{\mathcal{D}}.
\end{eqnarray}
Since by Hypothesis \ref{h3}, as we have seen, $a$ is the unique minimizer of $W$ in $\mathcal{D}$ it results
\begin{eqnarray}\label{delta-big}
u_R(D_{0,R})\cap Z_\delta=\varnothing,\;\text{ for }\;\delta=d(a,\partial\mathcal{D})>0.
\end{eqnarray}
This, the bound (\ref{below-emmeprime}) and Theorem \ref{teo-2} imply
\begin{eqnarray}\label{q-existence}
\vert u_R(x)-a\vert<q(d(x,\partial D_{0,R})),\;x\in D_{0,R},
\end{eqnarray}
for some positive function $r\rightarrow q(r)$ that does not depend on $R$. Therefore the
 inequality (\ref{q-existence}) passes to the limit along the sequence $u_{R_j}$ that defines
 the entire solution $u$. This and the $f$-equivariance of $u$ establish (ii). The exponential estimate (iii) follows from  Remark \ref{remark}. The proof of Theorem \ref{theorem1} is complete.

Assume now that $G$ is a discrete group. In this case the fundamental domain can be
 bounded as for the group $G^\prime$ considered in Section \ref{sec-2} or unbounded as for the group $G$ generated by the reflections in the plane $\{x_1=0\}$ and $\{x_1=1\}$
 of $\R^n,\,n>1$ where $F=\{x\in\R^n:0<x_1<1\}$. For establishing the estimates (ii) and (iii) in Theorem \ref{theorem2} it suffices to consider the case where $F$ is bounded. For $R>0$ define
\begin{eqnarray}\label{effeerre}
\left.\begin{array}{l}
v_R(y)=u_R(\frac{y}{R}),\;y\in\R^n,\\\\
F^R=\{y\in\R^n: \frac{y}{R}\in F\},
\end{array}\right.
\end{eqnarray}
and let $G^R$ the discrete reflection group generated by the reflections in the planes $P_1^R,\dots,P_l^R$ that bound $F^R$. There is an obvious group isomorphim
 $\eta^R:G^R\rightarrow G$ between $G^R$ and $G$ and the minimality of $u_R$ implies
 that $v_R\in W_{\rm loc}^{1,2}(\R^n,\R^m)$ is a local minimizers in the class of $f^R$-equivariant maps where $f^R:=f\circ\eta^R$. Therefore $v_R$ is a  solution of
\begin{eqnarray}\label{weak}
\Delta v-W_u(v)=0,\;\text{ in }\;\R^n.
\end{eqnarray}
This, (\ref{below-emme}) and elliptic regularity implies
\begin{eqnarray}\label{under-emmeprime}
\vert\nabla v_R\vert\leq M^\prime,\;\text{ in }\;\R^n,
\end{eqnarray}
for some $M^\prime>0$ independent of $R$. As before we have
\begin{eqnarray*}
u_R(\overline{D}_0)\subset\cup_{\gamma\in\Gamma_a}\gamma\overline{\Phi}=\overline{\mathcal{D}},
\end{eqnarray*}
or equivalently
\begin{eqnarray}\label{equvalently}
v_R(\overline{D}_0^R)\subset\overline{\mathcal{D}},
\end{eqnarray}
where $\overline{D}_0^R=\cup_{g\in G^a}(\eta^R)^{-1}(g)\overline{F}^R$. From (\ref{equvalently}) it follows
\begin{eqnarray}\label{delta-big1}
v_R(D_0^R)\cap Z_\delta=\varnothing,\;\text{ for }\;\delta=d(a,\partial\mathcal{D})>0.
\end{eqnarray}
Therefore we can apply Theorem \ref{teo-2} to $v_R$ with $\Omega=D_0^R$ and deduce, for $R\geq R_0:=r(\bar{q})$
\begin{eqnarray}\label{estimate-verre}
\vert v_R(y)-a\vert\leq q(d(y,\partial D_0^R)),\;y\in D_0^R,
\end{eqnarray}
which is equivalent to
\begin{eqnarray}\label{estimate-uerre}
\vert u_R(x)-a\vert\leq q(R d(x,\partial D_0)),\;x\in D_0.
\end{eqnarray}
The rest of the proof is as in the case of $G$ finite discussed before. The proof of Theorem \ref{theorem2} is complete.

\section{Three detailed examples involving the reflection group of the tetrahedron}
\subsection{Preliminaries}
In this subsection we recall some properties of the group of symmetry of a regular tetrahedron and the group of symmetry of a cube.

Let $\mathcal{T}$ be the group of symmetry of a regular tetrahedron $A_1A_2A_3A_4$ (with $A_1=(1,1,1)$, $A_2=(-1,-1,1)$, $A_3=(1,-1,-1)$, $A_4=(-1,1,-1)$) which can be
 inscribed in a cube centered at the origin $O$ (see Figure \ref{figure3}). The order of
 $\mathcal{T}$ is $|\mathcal{T}|=24$ and $\mathcal{T}$ is isomorphic to the permutation group $S_4$. The $24$ elements of $\mathcal{T}$ are associated to the following elements of $S_4$:
\begin{itemize}
\item $I_3$ the identity map of $\R^3$, corresponds to the unit of $S_4$ ($I_3$ fixes the $4$ vertices),
\item the $8$ rotations of angle $\pm 2 \pi/3$ with respect to the axes $OA_i$ correspond to the $3$-cycles $(j \ k \ l)$ (only the vertex $A_i$ is fixed),
\item the $6$ reflections with respect to the planes $OA_iA_j$ correspond to the transpositions $(k \ l)$ (the vertex $A_i$ and $A_j$ are fixed),
\item the $3$ symmetries with respect to the coordinate axes correspond to the permutations $(i \ j)(k \ l)$ (no vertex fixed),
\item a reflection with respect to the plane $OA_kA_l$ composed with a rotation with respect to the axis $OA_i$ corresponds to one of the six $4$-cycles $(i \ j \ k \ l)$ (no vertex fixed).
\end{itemize}
Since there exists a homomorphism between the permutation groups $S_4$ and $S_3$, and since $S_3$ is isomorphic to the dihedral group $D_3$ which is the group of symmetry of an equilateral triangle in the plane, we also have a homomorphism $\phi: \mathcal{T} \to D_3$. $\phi$ associates:
\begin{itemize}
\item $I_3$ and the symmetries with respect to the coordinate axes to $I_2$

(i.e. $\ker \: \phi = \{I_3 \text{ and the three symmetries with respect to the coordinate axes}\}$),
\item the rotation of angle $2 \pi/3$ with respect to the axis $OA_1$ to the rotation $\rho$ of angle $2 \pi /3$ in the plane,
\item the reflections with respect to the planes $OA_1A_2$ and $OA_3A_4$ to the reflection $\sigma_0:\R^2\rightarrow\R^2$ in the $u_1$  axis (cf. Figure \ref{figure3}),
\item the reflections with respect to the planes $OA_1A_4$ and $OA_2A_3$ to the reflection $\sigma_1=\rho \sigma_0$,
\item the reflections with respect to the planes $OA_1A_3$ and $OA_2A_4$ to the reflection $\sigma_2=\rho^2 \sigma_0$.
\end{itemize}

Let $\mathcal{K}$ be the group of symmetry of a cube centered at the origin $O$ with vertices at the points $(\pm 1,\pm 1, \pm1)$. The order of $\mathcal{K}$ is $|\mathcal{K}|=48$, and $\mathcal{K}$ contains the group of symmetry of the regular tetrahedron as a
 subgroup (i.e. $\mathcal{T}<\mathcal{K}$). Let $\sigma:\R^3 \to \R^3$, $\sigma (x)=-x$,
 be the antipodal map. Clearly, $\sigma$ is an element of $\mathcal{K}$ of order $2$ which does not belong to $\mathcal{T}$, and $\mathcal{K}=\mathcal{T} \cup \sigma
 \mathcal{T}$. Furthermore, $\sigma$ commutes with the reflections with respect to the planes $OA_iA_j$, thus $\sigma$ commutes with every element of $\mathcal{K}$. As a consequence, the correspondence
$$\{I_3,\sigma \} \times \mathcal{T} \ni(\alpha,\beta) \to \alpha\beta \in \mathcal{K},$$
defines an isomorphism of the group product $\{I_3,\sigma \}\times \mathcal{T}$ onto $\mathcal{K}$,
and we can define a homomorphism $\psi: \mathcal{K} \to \mathcal{T}$ by setting $\psi(\beta)=\beta$, and $\psi(\sigma \beta)=\beta$, for every $\beta \in \mathcal{T}$.
 By definition, $\psi$ leaves invariant the elements of $\mathcal{T}$. We also mention that
 $\mathcal{K}$ contains the $3$ reflections with respect to the coordinate planes $x_i=0$
 (which are the symmetries with respect to the coordinate axes $Ox_i$ composed with $\sigma$).

\subsection{A solution $u:\R^3 \to \R^2$ to \eqref{elliptic system} with the reflection group of the tetrahedron acting on the domain and the reflection group of the equilateral triangle acting on the target}

\

In this example we consider the aforementioned homomorphism $\phi:\mathcal{T} \to D_3$. Let $F$ be the fundamental domain of $\mathcal{T}$ bounded by the planes
 $OA_1A_2$, $OA_3A_4$ and $OA_1A_4$, and $\Phi$ be the fundamental domain of
 $D_3$ bounded by the lines $u_2=0$ and $u_2=\sqrt{3}u_1$ corresponding to the
 reflections $\sigma_0$ and $\sigma_1$. According to what precedes, the image by $\phi$
 of the reflections with respect to the planes $OA_1A_2$ and $OA_3A_4$ is $\sigma_0$,
 while the image by $\phi$ of the reflection with respect to the plane $OA_3A_4$ is
 $\sigma_1$. Thus $\phi$ is a positive homomorphism that associates $F$ to $\Phi$, and Hypothesis \ref{h1} is satisfied. If Hypotheses \ref{h2}--\ref{h3} also hold, Theorem \ref{theorem1} ensures the existence of a $\phi$-equivariant solution $u$ to \eqref{elliptic system}. In particular (see Figure \ref{figure3}) $\phi$-equivariance implies that $u$ maps
\begin{itemize}
\item the coordinate axes into the reflection lines (with the same colour),
\item the planes $OA_1A_2$ and $OA_3A_4$ into the reflection line $u_2=0$,
\item the planes $OA_1A_4$ and $OA_2A_3$ into the reflection line $u_2=\sqrt{3}u_1$,
\item the planes $OA_1A_3$ and $OA_2A_4$ into the reflection line $u_2=-\sqrt{3}u_1$,
\item the diagonals of the cube	at the origin $O$.
\end{itemize}
In addition Theorem \ref{theorem1} implies that the solution $u$ is positive (i.e. $u(\overline {F}) \subset \overline \Phi$), and
 therefore $u$ maps each fundamental domain of $\mathcal{T}$ into a fundamental domain
 of $D_3$ as in Figure \ref{figure3}. If for instance the potential $W$ has $6$ minima (one
 in the interior of each fundamental domain of $D_3$), then the domain space $\R^3$ is also split into six regions as in Figure \ref{figure3}. Properties (ii) and (iii) of Theorem
 \ref{theorem1} state that for every $x$ in such a region $D$, $u(x)$ converges as $d(x,\partial D) \to \infty$, to the corresponding minimum $a$ of $W$.

\begin{figure}[h]
\begin{center}
\includegraphics[scale=0.7]{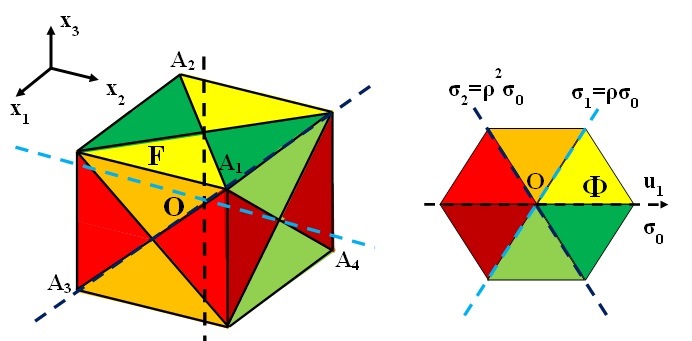}
\end{center}
\caption{Fundamental domains for the action of $\mathcal{T}$ on $\R^3$ (left) and for the action of $D_3$ on $\R^2$ (right). The $\phi$-equivariant
 solution $u:R^3\rightarrow\R^2$ of (\ref{elliptic system}) given by Theorem \ref{theorem1} maps fundamental domains into fundamental domains with the same color. In particular $u$ maps the infinite double cone (union of four fundamental domains) generated by $O$ and by the two yellow triangle into the sector $\Phi$.}
\label{figure3}
\end{figure}

\subsection{A solution $u:\R^3 \to \R^3$ to \eqref{elliptic system} with the reflection group of the cube acting on the domain and the reflection group of the tetrahedron acting on the target}

\

In this example we consider the homomorphism $\psi: \mathcal{K} \to \mathcal{T}$
 defined previously. Now, we denote by $F$ be the fundamental domain of $\mathcal{K}$
 bounded by the planes $OA_1A_2$, $OA_1A_4$ and $x_2=0$, and by $\Phi$ be the fundamental domain of $\mathcal{T}$ bounded by the planes $OA_1A_2$, $OA_1A_4$ and
 $OA_2A_3$ (cf. Figure \ref{figure4}). According to what precedes, $\psi$ leaves invariant
 the reflections with respect to the planes $OA_1A_2$ and $OA_1A_4$, while the image by
 $\psi$ of the reflection with respect to the plane $x_2=0$ is the coordinate axis $x_2$, that is, the intersection of the planes $OA_1A_4$ and $OA_2A_3$. Thus $\psi$ is a positive
 homomorphism that associates $F$ to $\Phi$, and Hypothesis \ref{h1} is satisfied. If
 Hypotheses \ref{h2}--\ref{h3} also hold, Theorem \ref{theorem1} ensures the existence of a $\psi$-equivariant solution $u$ to \eqref{elliptic system}. More precisely, $\psi$-equivariance implies that $u$ maps
\begin{itemize}
\item every plane $OA_iA_j$ into itself,
\item every diagonal of the cube into itself,
\item the coordinate planes into the perpendicular coordinate axes,
\item the coordinate axes at the origin $O$.
\end{itemize}
In addition, the solution $u$ is positive (i.e. $u(\overline {F}) \subset \overline \Phi$), and in fact $u$ maps each fundamental domain of $\mathcal{K}$ into a fundamental domain of $\mathcal{T}$ as in Figure \ref{figure4}.

\begin{figure}[h]
\begin{center}
\includegraphics[scale=0.7]{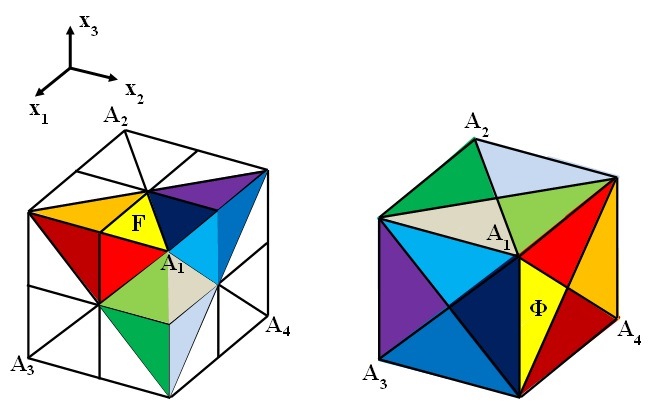}
\end{center}
\caption{Fundamental domains for the action on $\R^3$ of $\mathcal{K}$ (left) and $\mathcal{T}$ (right). The $\psi$-equivariant solution $u:\R^3\rightarrow\R^3$ of (\ref{elliptic system}) given by Theorem \ref{theorem1} maps fundamental domains into fundamental domains with the same color. Note in particular that $u$ maps $\overline{F}\cup\sigma\overline{F}$ into $\overline{\Phi}$.}
\label{figure4}
\end{figure}

\begin{figure}[h]
\begin{center}
\includegraphics[scale=0.7]{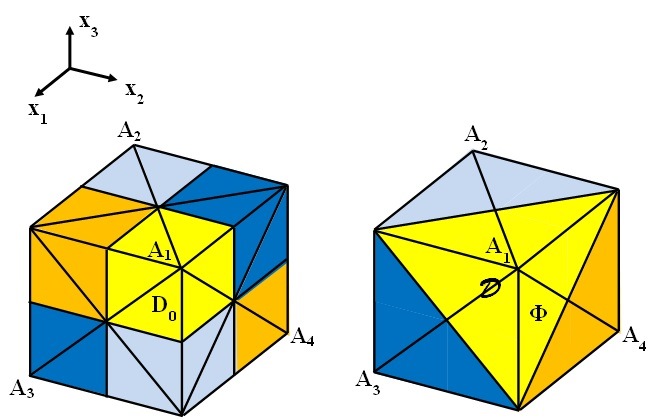}
\end{center}
\caption{The sets $\mathcal{D}=\{u=\alpha_1(1,1,-1)+\alpha_2(-1,1,1)+\alpha_3(1,-1,1), \alpha_i>0, i=1,2,3\}$, $D_0=\{x_1>0, \forall i=1,2,3\}$ and $D=D_0\cup\sigma D_0$ when $W$ has four minima on the vertices of the tetrahedron. In this case the solution $u$ of (\ref{elliptic system}) given by Theorem \ref{theorem1} satisfies: $u(x)\rightarrow A_1$, for $\min_i\vert x_i\vert\rightarrow+\infty,\;x\in D$.}
\label{figure5}
\end{figure}

If the potential $W$ has for instance $4$ minima (located at the vertices of the tetrahedron
 $A_1$,$A_2$,$A_3$ and $A_4$), then the stabilizer $\Gamma_a$ of $a=A_1$ in
 $\Gamma=\mathcal{T}$ has six elements: $I_3$, the reflections with respect to the planes $OA_1A_2$, $OA_1A_3$ and $OA_1A_4$, and the rotations of angle $\pm 2 \pi /3$ with
 respect to the axis $OA_1$. Thus $\mathcal{D}$ is the (interior of the closure) of the union of the six fundamental domains that have $A_1$ on their boundary; the group $\psi^{-1
}(\Gamma_a)=\Gamma_a \cup \sigma \Gamma_a$ has $12$ elements, and the set $D$
 has two connected components: the solid right angle $D_0=\{x_i>0, \: \forall i=1,2,3 \}$
 and $\sigma D_0=-D_0$ (cf. Figure \ref{figure5} and also note that the group $G^a$ is in
 this particular case the group $\Gamma_a$). According to Theorem \ref{theorem1}, if $x
 \in D_0$ and  $d(x,\partial D_0) \to \infty$ (that is, if $x_i \to +\infty$ for every
 $i=1,2,3$) then $u(x) \to a$. Of course, the same result is true when $x \in -D_0$ and
 $d(x,\partial (-D_0)) \to \infty$, and the solution also converges in the other solid right angle cones to the corresponding minima of $W$ as in Figure \ref{figure5}.

\subsection{A crystalline structure in $\R^3$}

\

Now, let us consider the discrete reflection group $\mathcal{K'}$ acting in $\R^3$ which is generated by the reflections $s_1$, $s_2$, $s_3$ and $s_4$ with respect to the corresponding planes $P_1:=OA_1A_2$, $P_2:=OA_1A_4$, $P_3:=\{x_2=0 \}$ and $P_4:=\{x_1+ x_3=2 \}$. These planes bound the fundamental domain $F'$ of $\mathcal{K'}$ with vertices at the points $O$, $A_1$, $I:=(1,0,1)$ and $B:=(0,0,2)$ (cf. Figure \ref{figure10}).
The point group of $\mathcal{K'}$, that is the stabilizer of the origin, is the group $\mathcal{K}$, and we have $\mathcal{K'}=T\mathcal{K}$, where $T$ is the translation
 group of $\mathcal{K'}$. $T$ is generated by the translations with respect to the vectors
 $t_1:=(2,0,2)$, $t_2:=(0,2,2)$ and $t_3:=(0,-2,2)$. By composing the canonical
 homomorphism $p: \mathcal{K'} \to \mathcal{K}$ such that $p(tg)=g$ for every $t \in T$
 and $g \in \mathcal{K}$, with the homomorphism $\psi: \mathcal{K} \to\mathcal{T}$ defined previously, we obtain a homomorphism $\psi': \mathcal{K'} \to \mathcal{T}$. We
 note that $\psi' (s_4)$ is the image by $\psi$ of the reflection with respect to the plane
 $OA_2A_3$, which is the reflection with  respect to $OA_2A_3$. Thus, $\psi'$ is a positive homomorphism which associates $F'$ to the fundamental domain $\Phi$ of $\mathcal{T}$ bounded by the planes $OA_1A_2$, $OA_1A_4$ and $OA_2A_3$.  In this case the
 elementary crystal $C=\cup_{g \in \mathcal{K}} \overline{gF'}$ is a {\it rhombic
 dodecahedron} (cf. Figure \ref{figure10}) that tiles the three dimensional space when
 translated by the elements of $T$ \footnote{Space filling tessellation with rhombic
 dodecahedra is the crystal structure in which often are found garnets and other minerals like pyrite and magnetite.}. Several structures are possible for the solution $u_R$ given by Theorem \ref{theorem2} depending on the position of $a\in\overline{\Phi}$. For instance if
 $a\in\Phi$ we have $\mathcal{D}=\Phi$ and $D_0=F^\prime$. If $a=(0,1,0)$ (cf. Figure
 \ref{figure11}) then $\mathcal{D}=\{u: \max\{\vert u_1\vert,\vert u_3\vert\}<u_2\}$ and
 $D_0$ is the pyramid with basis the rhombus defined by the points $A_1,B,(1,-1,1),(2,0,0)$, and vertex in $O$. Finally, if $a=A_1$, $\mathcal{D}$ is the cone which has vertex in $O$ and is generated by the triangle with vertices at the points
$(1,1,-1), (1,-1,1),(-1,1,1)$, while $D_0$ is the polyhedron (union of two pyramids) with vertices at the points $O, (2,0,0), (0,2,2), B, A_1$ and $D=\cup_{t\in T}(D_0\cup\sigma D_0)$.

\begin{figure}[h]
\begin{center}
\includegraphics[scale=.7]{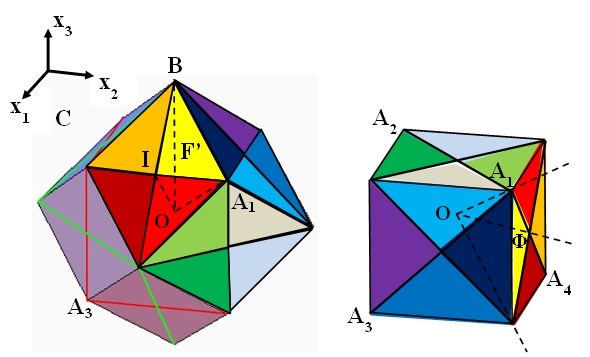}
\end{center}
 \caption{Fundamental domains for the action on $\R^3$ of $\mathcal{K'}$ (left) and $\mathcal{T}$ (right). The fundamental domain
 $F'$ of $\mathcal{K'}$ is a pyramid with basis the triangle $A_1BI$ and vertex in $O$. Under the action of the point group $\mathcal{K}$, $F'$ generates the rhombic dodecahedron $C$ (left) which tiles the domain space $\R^3$ when translated by the elements of $T$. The $\psi^\prime$-equivariant solution $u:\R^3\rightarrow\R^3$ of (\ref{elliptic system}) given by Theorem \ref{theorem2} maps fundamental domains into fundamental domains with the same color. Note in particular that $u$ maps $\cup_{t\in T}(\overline{F^\prime}\cup\sigma\overline{F^\prime})$ into $\overline{\Phi}$.}
\label{figure10}
\end{figure}

\begin{figure}[h]
\begin{center}
\includegraphics[scale=.7]{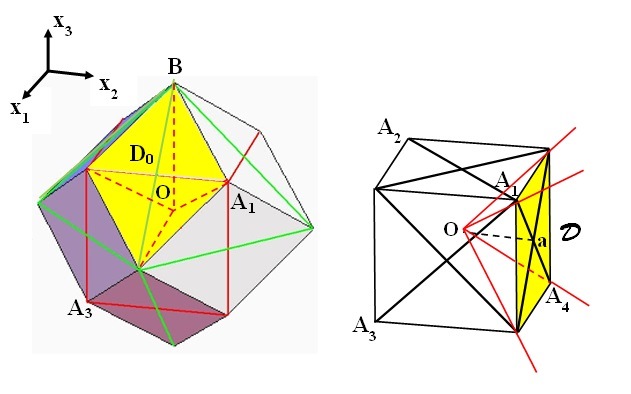}
\end{center}
\caption{The sets $D_0$ and $\mathcal{D}$ when $W$ has six minima (one in the middle of each side of the tetrahedron). In this case
the $\psi^\prime$-equivariant solution $u_R:\R^3\rightarrow\R^3$ given by Theorem \ref{theorem2} satisfies $\lim_{R\rightarrow+\infty}u_R(x+t)=(0,1,0)$ for $x\in D_0\cup\sigma D_0,\,t\in T$.}
\label{figure11}
\end{figure}

\section{Appendix: other examples in lower dimension}
\subsection{Positive homomorphisms between finite reflection groups of the plane}\label{sec:positive hom}

\

The finite reflection groups of the plane are the dihedral groups $D_n$ with $n \geq 1$. $D_n$ contains $2n$ elements: the rotations $r_n^0=I_2$, $r_n^1$,..., $r_n^{n-1}$ (where
 $I_2$ is the identity map of the plane, and $r_n$ is the rotation of angle $2 \pi/n$), and
 the reflections  $r_n^0 s=s$, $r_n^1 s$,..., $r_n^{n-1}s$ (where $s$ is the reflection with
 respect to the $x_1$ coordinate axis). Similarly, the elements of $D_{nk}$ (with $k \geq
 1$) are the rotations: $r_{nk}^0=I_2$, $r_{nk}^1$,..., $r_{nk}^{nk-1}$ (where $r_{nk}$ is
 the rotation of angle $2 \pi/nk$) and the reflections  $r_{nk}^0s=s$, $r_{nk}^1s$,...,
 $r_{nk}^{nk-1}s$. In the two Propositions below we have determined all the positive
 homomorphisms between finite reflection groups of the plane (up to an isomorphism).
 From the  list of homomorphisms between dihedral groups established in \cite{johnson}, we have extracted the positive ones.

For $m=\pm 1$ we define the homomorphism $f_m : D_{nk} \to D_n$, by setting $f_m(r_{nk}^p)= r_n^{mp}$ and  $f_m(r_{nk}^ps)= r_n^{mp}s$, for every integer $p$. Thanks to the property $sr=r^{-1}s$ (which holds for every reflection $s$ and every
 rotation $r$), it is easy to check that $f_m$ is a homomorphism from $D_{nk}$ onto
 $D_n$. We can also define the homomorphism $g: D_{2k} \to D_2$, by setting
 $g(r_{2k}^p)= s^p$ and $g(r_{2k}^ps)= s^p \sigma$ for every integer $p$, where $\sigma$ denotes the the antipodal map $\sigma u =-u$.

\begin{proposition}\label{proposition1}
If $n \geq 2$, $k \geq 1$, $G=D_{nk}$ acts on the domain plane $\R^2$, and $\Gamma=D_n$ on the target plane $\R^2$, then for every $m=\pm 1$, $f_m$ is a
 positive homomorphism which associates the fundamental domain $F:=\{re^{it} \mid 0<r,
 \: 0<t< \pi/nk \}$ of $G$ to the fundamental domain $\Phi:=\{re^{it} \mid 0<r, \:
 0<mt<\pi/n \}$ of $\Gamma$. In addition, the homomorphism $f_m$ leaves invariant the elements of $D_n<D_{nk}$ if and only if $mk=1 \mod n$.
\end{proposition}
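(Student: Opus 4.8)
The plan is to establish the three assertions in turn: that $f_m$ is a homomorphism onto $D_n$, that it is positive with the stated association $F \mapsto \Phi$, and finally the invariance criterion $mk \equiv 1 \pmod n$. I would begin by recording compatibility with the group law. Since every element of $D_{nk}$ is uniquely of the form $r_{nk}^p$ or $r_{nk}^p s$, it suffices to check the defining dihedral relation $s\, r_{nk} = r_{nk}^{-1} s$; applying $f_m$ turns this into $s\, r_n^m = r_n^{-m} s$, which is exactly the corresponding relation in $D_n$. Hence $f_m$ is a well-defined homomorphism, and it is clearly onto since its image contains $r_n^{\pm 1}$ and $s$.

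Next, for positivity I would identify the two reflections bounding $F$ and compute their images, using the convention that $r_{nk}^p s$ is the reflection in the line through the origin at angle $p\pi/nk$. The wedge $F=\{0<t<\pi/nk\}$ is bounded by $P_1=\{t=0\}$, the fixed line of $s_1:=s=r_{nk}^0 s$, and by $P_2=\{t=\pi/nk\}$, the fixed line of $s_2:=r_{nk}s$. Then $f_m(s_1)=s$, whose fixed set is the line $\Pi_1=\{u_2=0\}$, while $f_m(s_2)=r_n^m s$, whose fixed set is the line $\Pi_2$ at angle $m\pi/n$. For either sign of $m$ these two lines are precisely the ones bounding $\Phi=\{0<mt<\pi/n\}$, so $\ker(f_m(s_i)-I_2)=\Pi_i$ for $i=1,2$, and Definition \ref{positive homomorphism} is satisfied with this choice of $\Phi$.

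For the invariance criterion I would make explicit the canonical copy of $D_n$ inside $D_{nk}$, namely the embedding $\iota$ sending $r_n^p \mapsto r_{nk}^{pk}$ and $r_n^p s \mapsto r_{nk}^{pk} s$, which identifies the rotation by $2\pi/n$ in the two groups (note $r_{nk}^{pk}$ is the rotation by $2\pi p/n$). A direct computation gives $f_m(\iota(r_n^p))=r_n^{mpk}$ and $f_m(\iota(r_n^p s))=r_n^{mpk}s$, so $f_m\circ\iota$ is the identity on $D_n$ if and only if $r_n^{mpk}=r_n^p$ for every integer $p$, that is, $mpk\equiv p \pmod n$ for all $p$. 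Testing $p=1$ forces $mk\equiv 1\pmod n$, and conversely that single congruence implies all the others; this yields the claimed equivalence.

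Because each part reduces either to the dihedral relation or to one modular computation, there is no deep obstacle here. The only delicate point is bookkeeping: one must fix the convention that $r_{nk}^p s$ reflects in the line at angle $p\pi/nk$ and then correctly match the two generating reflections of $F$ with the two bounding lines of $\Phi$ simultaneously for both signs $m=\pm 1$, taking care that for $m=-1$ the domain $\Phi=\{-\pi/n<t<0\}$ lies on the opposite side of the $u_1$-axis.
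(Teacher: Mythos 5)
Your argument is correct and follows essentially the same route as the paper: the dihedral relation $sr=r^{-1}s$ to verify $f_m$ is a homomorphism, the computation of the fixed lines of $f_m(s)=s$ and $f_m(r_{nk}s)=r_n^m s$ to match the walls of $F$ with those of $\Phi$, and the identification $r_n=r_{nk}^k$ giving $f_m(r_n)=r_n^{mk}$ for the congruence criterion. Your write-up is merely a little more explicit about the reflection-line convention and the embedding $D_n<D_{nk}$; no substantive difference.
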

\begin{proof} By definition, the lines that bound the fundamental domain $F$ correspond
 to the reflections $s$ and $r_{nk}s$. Since for $m=\pm 1$, the fixed points of the
 reflections $f_m(s)= s$ and $f_m(r_{nk}s)= r_n^{m}s$ are the lines that bound the fundamental domain $\Phi$, the homomorphism $f_m$ can associate $F$ to $\Phi$. $f_m$
 leaves invariant the elements of $D_n<D_{nk}$ if and only if $f_m(r_n)=r_n$, that is, if and
 only if
$$f_m(r_n)=f_m(r_{nk}^k)=r_n^{mk}=r_n \Leftrightarrow r_n^{mk-1}=I_2 \Leftrightarrow mk=1 \mod n.$$
\end{proof}
\begin{proposition}\label{proposition2}
If $k \geq 1$, $G=D_{2k}$ acts on the domain plane $\R^2$, and $\Gamma=D_2$ on the
 target plane $\R^2$, then $g$ is a positive homomorphism which associates the
 fundamental domain $F=\{re^{it} \mid 0<r, \: 0<t< \pi/2k \}$ of $G$ to the fundamental domain
$\Phi=\{re^{it} \mid 0<r, \: 0<t<\pi/2 \}$ of $\Gamma$.
\end{proposition}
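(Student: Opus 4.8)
The plan is to verify directly the defining condition \eqref{positive-hom} of Definition \ref{positive homomorphism} for the map $g$, which we already know to be a homomorphism onto $D_2$ (this is checked exactly as for $f_m$, using the dihedral relation $sr=r^{-1}s$ together with the fact that $\sigma$ is central in $D_2$). Since positivity is a statement purely about the fixed-point sets of the images of the wall reflections, the whole argument reduces to a short computation.

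First I would record the two walls of $F=\{re^{it}\mid 0<r,\ 0<t<\pi/2k\}$ and the reflections attached to them: the wall lying on the $x_1$-axis (the ray at angle $0$) corresponds to the reflection $s_1=s$, while the wall on the ray at angle $\pi/2k$ corresponds to $s_2=r_{2k}s$ (recall that in $D_{2k}$ the element $r_{2k}^{j}s$ is the reflection in the line through the origin at angle $j\pi/2k$). Evaluating $g$ on these two reflections gives
\[
g(s_1)=g(r_{2k}^0 s)=s^0\sigma=\sigma,\qquad g(s_2)=g(r_{2k}^1 s)=s\sigma,
\]
where $\sigma$ is the antipodal map and $s\sigma$ is the reflection in the $x_2$-axis.

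Next I would propose $\Phi=\{re^{it}\mid 0<r,\ 0<t<\pi/2\}$, i.e.\ the first quadrant, as the associated fundamental domain of $D_2$; it is indeed a fundamental domain, since the two reflection lines of $D_2$ (the $x_1$- and $x_2$-axes) cut the plane into four congruent quadrants. Writing $\Pi_1$ for the $x_1$-axis (the fixed line of $s$) and $\Pi_2$ for the $x_2$-axis (the fixed line of $s\sigma$), the walls of $\Phi$ are carried by $\Pi_1$ and $\Pi_2$. Condition \eqref{positive-hom} is then checked wall by wall: for $s_2$ we have $\ker(g(s_2)-I_2)=\ker(s\sigma-I_2)=\Pi_2$, and for $s_1$ we have $\ker(g(s_1)-I_2)=\ker(\sigma-I_2)=\{0\}=\Pi_1\cap\Pi_2$. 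Both are intersections of walls of $\Phi$ in the sense of \eqref{positive-hom}, so $g$ is positive and associates $F$ to $\Phi$.

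The only point that differs from Proposition \ref{proposition1}, and the one I expect to be the real content here, is that $g(s_1)=\sigma$ is the antipodal map rather than an ordinary reflection; by the interpretation \eqref{generalized-projection} its fixed subspace is $E=\{0\}$ rather than a line. The definition of positivity was written precisely to accommodate this degenerate case, since it permits $\ker(f(s_i)-I_m)$ to be an intersection of several walls, and here $\{0\}=\Pi_1\cap\Pi_2$ realizes exactly that intersection. Everything else is the same routine verification as in the proof of Proposition \ref{proposition1}.
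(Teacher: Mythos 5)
Your proof is correct and follows essentially the same route as the paper's: identify the wall reflections $s$ and $r_{2k}s$ of $F$, compute $g(s)=\sigma$ (fixing only the origin $=\Pi_1\cap\Pi_2$) and $g(r_{2k}s)=s\sigma$ (fixing the $u_2$-axis $\Pi_2$), and conclude positivity from Definition \ref{positive homomorphism}. Your added remark that the degenerate fixed set $\{0\}$ is exactly the case the definition's ``intersection of several walls'' clause was designed for is a fair gloss on what the paper leaves implicit.
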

\begin{proof}
As previously, we see that $g(s)= \sigma$ fixes only the origin, while $g(r_{2k}s)= s \sigma$ fixes the $u_2$ coordinate axis. Thus, the homomorphism $g$ can associate $F$
 to $\Phi$ (and in fact it can associate $F$ to any of the $4$ fundamental domains of $D_2$).
\end{proof}
To illustrate the Propositions above, let us give some examples.
\begin{itemize}
\item The homomorphism $f:D_6 \to D_3$ that was mentioned at the beginning of Sec.2 (cf. also \cite{bates-fusco}) coincides with the homomorphism $f_{-1}$ of Proposition
 \ref{proposition1} with $n=3$, $k=2$ and $m=-1$. Since $mk=-2=1 \mod 3$, we see
 again that it leaves invariant the elements of $D_3$.
\item Taking $n=3$, $k=5$ and $m=-1$, we check that $mk=1 \mod n$, and we obtain a
 new homomorphism $f_{-1}: D_{15} \to D_3$ that leaves invariant the elements of $D_3$. The kernel of this homomorphism is the cyclic group generated by the rotation $r_5$.
\item Taking $n=2$, $k=2$ and $m=1$, we obtain the positive homomorphism $f_1:D_4 \to D_2$. When Hypotheses \ref{h2}--\ref{h3} also hold, Theorem \ref{theorem1} ensure
 the existence of a $f_1$-equivariant solution to \eqref{elliptic system} which maps
 $\overline F$ into $\overline \Phi$, and the other fundamental domains of $G=D_4$ as in Figure \ref{figure6}.
\item Considering the homomorphism $g:D_4 \to D_2$ of Proposition \ref{proposition2} (with k=2) we can also construct a g-equivariant solution $u$ to \eqref{elliptic system}.
 This solution has the particularity that the coordinate axes are mapped at the origin.
 Indeed, by $g$-equivariance, if $x,y \in \R^2$ are symmetric with respect to one of the coordinate axes, then $u(x)=-u(y)$ (cf. Figure \ref{figure7} for the correspondence of the fundamental domains).
\end{itemize}

\begin{figure}[h]
\begin{center}
\includegraphics[scale=0.7]{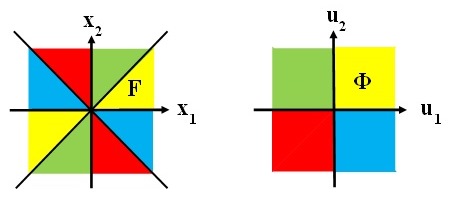}
\end{center}
\caption{The correspondence of the fundamental domains for a solution to \eqref{elliptic system} equivariant with respect to the homomorphism $f_1:D_4 \to D_2$.}
\label{figure6}
\end{figure}

\begin{figure}[h]
\begin{center}
\includegraphics[scale=0.7]{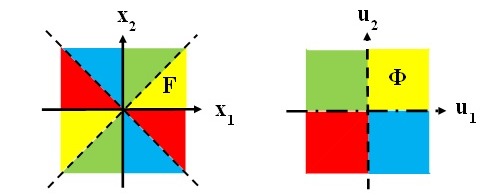}
\end{center}
\caption{The correspondence of the fundamental domains for a solution to \eqref{elliptic system} equivariant with respect to the homomorphism $g:D_4 \to D_2$.}
\label{figure7}
\end{figure}

\subsection{Saddle solutions}

\

In this subsection we are going to construct as an application of Theorems \ref{theorem1} and \ref{theorem2}, scalar solutions to \eqref{elliptic system} and \eqref{elliptic system R}
 having particular symmetries. The only finite reflection group that acts on the target space
 $\R$ is the dihedral group $\Gamma=D_1$ with two elements: the identity $I_1$ and the
 antipodal map $\sigma u=-u$. We assume that a finite or a discrete reflection group $G$
 acts on the domain space $\R^n$, and that $W:\R \to \R$ satisfies Hypotheses \ref{h2}--\ref{h3}, that is,
\begin{itemize}
\item $W$ is a nonnegative and even function,
\item there exists $M>0$ such that $W(u) \geq W(M)$, for $u \geq M$,
\item $W(u)=0 \Leftrightarrow u=\pm a$, with in addition $a>0$ and $W''(a)>0$.
\end{itemize}
Clearly, the map $\epsilon$ which sends the orientation-preserving motions to $I_1$, and the orientation-reversing motions to $\sigma$, is a positive homomorphism from $G$ onto $\Gamma$. Thus, Theorems \ref{theorem1} and \ref{theorem2} ensure the existence of classical solutions $u: \R^n \to \R$ to \eqref{elliptic system} and \eqref{elliptic system R} with the following properties:
\begin{itemize}
\item[(i)] $\epsilon$-equivariance implies that if $x,y \in \R^n$ are symmetric with respect to a reflection plane of $G$, then $u(x)=-u(y)$. In particular, $u$ vanishes on the reflection planes of $G$. If $G$ is a discrete reflection group, then $u$ is periodic in the sense that $u(x+t)=u(x)$, for every $x \in \R^n$, and every translation $t$ in the translation group $T<G$.
\item[(ii)] Positivity means that either $u \geq 0$ or $u \leq 0$ in each fundamental domain $F$ of $G$.
\item[(iii)] In each fundamental domain $F$, $u(x)$ approaches either $a$ or $-a$, as $x \in F$ and $d(x,\partial F)$ increases.
\end{itemize}

We also give another example, more elaborated, when $G=D_{2k}$ (with $k \geq 1$) acts on $\R^2$. Let us consider the homomorphism
$h: D_{2k} \to D_1$ such that $h(r_{2k}^p)= \sigma^p$ and $h(r_{2k}^ps)= \sigma^p$, for every integer $p$ (see the previous subsection for the notation).
In this particular set-up, we can again construct a $h$-equivariant solution $u: \R^2 \to \R$ to \eqref{elliptic system} which has in each fundamental domain of $D_{2k}$ alternatively even and odd symmetries. Figure \ref{figure8} represents the symmetries of such a solution for $k=2$.

\begin{figure}[h]
\begin{center}
\includegraphics[scale=0.7]{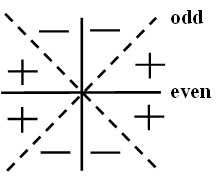}
\end{center}
\caption{The symmetries of a solution $u: \R^2 \to \R$ to \eqref{elliptic system} equivariant with respect to the homomorphism $h:D_4 \to D_1$.}
\label{figure8}
\end{figure}

\subsection{Other examples involving discrete reflection groups}

\

To finish, we give some more examples illustrating Theorem \ref{theorem2}. Let us assume again that the discrete reflection group $G'$ acts on the domain $x$-plane as in Sec.2 (cf. also the end of Sec.3), but let us consider now a new reflection group acting on the target
 $u$-plane: the group $\Gamma=D_2$. We construct a homomorphism $f'': G' \to D_2$ by
 composing the canonical projection $p:G' \to D_6$ with the homomorphism $g:D_6 \to
 D_2$ defined in subsection \ref{sec:positive hom} (that is, $f''=g \circ p$). As we did
 before for the homomorphism $f'$, we can check that $f''$ is a positive homomorphism.
 Thus, once again, Theorem \ref{theorem2} allows us to construct $f''$-equivariant
 solutions $u_R$ to \eqref{elliptic system R}. Figure \ref{figure9} represents the
 correspondence of the fundamental domains of $G'$ with the fundamental domains of
 $D_2$ for such solutions (compare with Figure \ref{figure2}). The $f''$-equivariant solutions have the particularity that some reflection lines of the group $G'$ are mapped at the origin.

\begin{figure}[h]
\begin{center}
\includegraphics[scale=0.7]{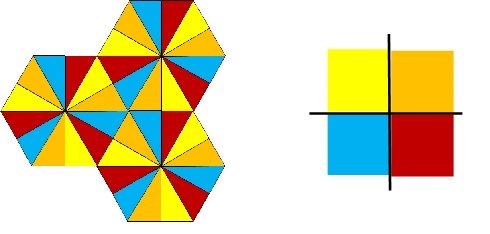}
\end{center}
\caption{The correspondence of the fundamental domains by the $f''$-equivariant solution $u_R$: on the right the fundamental domains of the discrete reflection group $G'$ and on the left the fundamental domains of the finite reflection group $D_2$.}
\label{figure9}
\end{figure}

Let us also mention a last example involving the discrete reflection group of the plane $H$, generated by the reflections with respect to the lines $x_2=0$, $x_2=x_1$ and $x_1=1$. The point group associated to $H$ is the group $D_4$, and we can compose the canonical projection $H \to D_4$ either with the homomorphism $f_1:D_4 \to D_2$ or with the homomorphism $g:D_4 \to D_2$ defined in subsection \ref{sec:positive hom}, to construct positive homomorphisms from $H$ onto $D_2$.

\

\

\

\

\

\nocite{*}
\bibliographystyle{plain}

\end{document}